\def\bkC{{\rm \kern.24em \vrule width.05em height1.4ex depth-.05ex 
\kern-.26em C}}
\def\C{\bkC}
\def\bksC{{\rm \kern.24em \vrule width.05em height1ex depth-.05ex 
\kern-.26em C}}
\def\bkE{{\rm I\kern-.22em E}}
\def\bkH{{\rm I\kern-.22em H}}
\def\H{\bkH} 
\def\bkN{{\rm I\kern-.17em N}}
\def\bkQ{{\rm \kern.24em \vrule width.05em height1.4ex depth-.05ex 
\kern-.26em Q}}
\def\bkR{{\rm I\kern-.17em R}}
\def\RR{\bkR}
\def\bkZ{{\rm Z\kern-.32em Z}}
\def\Z{\bkZ}
\def\bksZ{{\rm Z\kern-.22em Z}}
\def\PSL{PSL_2(\C)}
\def\D{\mathfrak{D}}
\def\tri{\mathcal{T}}
\def\n{\mathcal{N}}
\def\l{\mathcal{L}}
\def\m{\mathcal{M}}
\DeclareMathOperator{\lk}{lk}
\DeclareMathOperator{\ima}{Im}
\DeclareMathOperator{\Ker}{Ker}
\DeclareMathOperator{\im}{im}
\DeclareMathOperator{\h}{h}
\DeclareMathOperator{\vol}{vol}
\DeclareMathOperator{\Hess}{Hess}
\DeclareMathOperator{\Span}{Span}
\DeclareMathOperator{\rank}{rank}
\DeclareMathOperator{\ind}{\eta}
\DeclareMathOperator{\Lk}{Lk}
\DeclareMathOperator{\Homo}{H}
\DeclareMathOperator{\diff}{d}
\DeclareMathOperator{\TAS}{TAS}
\DeclareMathOperator{\STAS}{TAS_\l}
\newcommand\restr[2]{{
  \left.\kern-\nulldelimiterspace 
  #1 
  \vphantom{\big|} 
  \right|_{#2} 
  }}
\theoremstyle{plain}
\newtheorem{thm}{Theorem}
\newtheorem*{thm*}{Theorem}
\newtheorem{lem}[thm]{Lemma}
\newtheorem*{lem*}{Lemma}
\newtheorem{cor}[thm]{Corollary}
\newtheorem*{cor*}{Corollary}
\newtheorem*{cla*}{Claim}
\newtheorem*{pro*}{Proposition}
\newtheorem*{rem*}{Remark}
\newtheorem*{defn*}{Definition}
\begin{document}
\title{Pseudo-developing maps for ideal triangulations II:\\
Positively oriented ideal triangulations of cone-manifolds} 
\author{Alex Casella, Feng Luo and Stephan Tillmann}

\begin{abstract}
We generalise work of Young-Eun Choi to the setting of ideal triangulations with vertex links of arbitrary genus, showing that the set of all (possibly incomplete) hyperbolic cone-manifold structures  realised by positively oriented hyperbolic ideal tetrahedra on a given topological ideal triangulation and with prescribed cone angles at all edges is (if non-empty) a smooth complex manifold of dimension the sum of the genera of the vertex links. Moreover, we show that the complex lengths of a collection of peripheral elements give a local holomorphic parameterisation of this manifold.
\end{abstract}
\primaryclass{57M25, 57N10}
\keywords{3-manifold, hyperbolic structure, cone-manifold, ideal triangulation}


\maketitle


\section{Introduction}

The complement $N$ of the vertices in a triangulated orientable 3--dimensional pseudo-manifold $P$ carries a complete hyperbolic cone-manifold structure, where the singular locus is contained in the 1--skeleton and each ideal tetrahedron develops into an ideal hyperbolic tetrahedron. For instance, such a structure is obtained by realising each ideal tetrahedron in $N$ as a regular ideal hyperbolic 3--simplex (see \cite{st} and \S\ref{sec:Cone-deformation variety}). Let $\tri$ denote the (topological) ideal triangulation of $N$ and $E$ the set of ideal edges. For any prescribed cone angles $\kappa\co E \to \RR,$ let $\D^+(\tri, \kappa)$ be the set of all (possibly incomplete) hyperbolic cone-manifold structures with the ideal tetrahedra in $\tri$ realised by positively oriented hyperbolic tetrahedra and with the prescribed cone angles $\kappa.$ We show that if this set is non-empty, then it is a smooth complex manifold of dimension the sum of the genera of the vertex links (Corollary~\ref{for:level set is smooth}). This generalises the first main theorem of Choi~\cite{ch}. This result is deduced as a consequence of a more general result (Theorem~\ref{thm_dimension}), which is essentially due to Neumann~\cite{ne}. Moreover, we show that the complex lengths of a collection of non-trivial peripheral elements, $g$ for each vertex link of genus $g$, give a local holomorphic parametrisation of $\D^+(\tri, \kappa)$ (Corollary~\ref{cor:parametrication}). This is achieved through a generalisation (Theorem~\ref{injective_thm}) of the second main theorem of Choi~\cite{ch}.

Our approach is different from Choi's and includes new results on the interplay between tangential angle structures, the boundary map and the study of the volume function. It builds on previous work of Neumann~\cite{ne} as well as Futer and Gu\'eritaud \cite{FuGu}.

The results of this paper show that the examples of pseudo-manifolds $M$ with spherical vertex links in \cite{st} exhibit generic behaviour---for any prescribed cone angles at the edges, the manifold $\D^+(M, \kappa)$ is either 0--dimensional or empty. We conclude this paper by giving two examples, which both have \emph{two components where one might have expected one}. The first example is a once-cusped hyperbolic 3--manifold of finite volume with the property that the two discrete and faithful characters lie on different components of the $\PSL$--character variety. It was found via an application of the volume function explained to us by Nathan Dunfield.
The second example is a manifold with two boundary components---a torus and a genus two surface. In this example, one can find distinct prescribed cone angles $\kappa_1$ and $\kappa_2$ for the edges of an ideal triangulation with the property that $\kappa_1$ and $\kappa_2$ differ by $2\pi$ at two edges. In contrast, for manifolds with only torus boundary components, a standard Euler characteristic argument shows that at each edge the total cone angle has to be exactly $2\pi.$


\section{Preliminaries}


\subsection{Conventions for vector spaces}

If $X$ is a finite set, $\RR^X$ denotes the real vector space of all functions $X\to \RR$ and we assume that $\RR^X$ has the standard inner product, so that 
$$X^\star = \{ x^\star \in \RR^X \mid x\in X \text{ and for all }  y \in X: x^\star(y) =  \delta_{xy}\}$$
is an orthonormal basis of $\RR^X,$ where
Kronecker's notation $\delta_{xy} = \begin{cases} 1 & \text{if } y=x\\ 0 & \text{if } y\neq x \end{cases}$  is used. 


\subsection{Pseudo-manifolds and triangulations}
\label{subsec:pseudo-manifolds}

Let $\widetilde{\Delta}$ be a finite union of pairwise disjoint, oriented Euclidean $3$--simplices, and $\Phi$ be a family of orientation-reversing affine isomorphisms pairing the facets in $\widetilde{\Delta},$ with the properties that $\varphi \in \Phi$ if and only if $\varphi^{-1}\in \Phi,$ and every codimension-one facet is the domain of a unique element of $\Phi.$ The elements of $\Phi$ are termed \emph{face pairings}. The quotient space $P = \widetilde{\Delta}/\Phi$ with the quotient topology is a closed, orientable $3$--dimensional pseudo-manifold, and the quotient map is denoted $p\co \widetilde{\Delta} \to P.$ The triple $\tri = ( \widetilde{\Delta}, \Phi, p)$ is a \emph{(singular) triangulation} of $P.$ The adjective \emph{singular} is usually omitted, and we will not need to distinguish between the cases of a simplicial or a singular triangulation.
We will always assume that $P$ is connected. In the case where $P$ is not connected, the results of this paper apply to its connected components.

We will use the following notation:
$$T = \{\sigma_i \} = \widetilde{\Delta}^{(3)},\qquad\qquad
E = \{e_j\}=P^{(1)}, \qquad\qquad
V = \{v_k\}=P^{(0)}.
$$
Note that $E$ and $V$ are equivalence classes of 1--simplices and 0--simplices of $\widetilde{\Delta}$ respectively.

The set of non-manifold points of $P$ is contained in the $0$--skeleton. Denote this set $V_s \subseteq V=P^{(0)}.$ The cases of interest are usually when $V_s = \emptyset$ or $V_s = V$. In the first case $P$ is a closed 3--manifold. In the second case $\tri$ restricts to an \emph{ideal triangulation} of the topologically finite, non-compact 3--manifold $N = P \setminus P^{(0)}$ and $P$ is the \emph{end-compactification} of $N.$

For each vertex $v \in V$, $\Lk(v)$ is a closed orientable surface of genus $g_v\geq0$, with a triangulation $\tri_v$ induced from $\tri$. We will repeatedly make use of the following fact, which follows from a direct Euler characteristic calculation:

\begin{lem} \label{dimension_count}
$\displaystyle |T| - |E| + |V| = \sum_{v \in V} g_v$.
\end{lem}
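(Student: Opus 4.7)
The plan is a direct Euler characteristic computation on the vertex links. Since each $\Lk(v)$ is a closed orientable surface of genus $g_v$, it satisfies $\chi(\Lk(v)) = 2 - 2g_v$. The strategy is to re-express this Euler characteristic using the combinatorics of $\tri_v$, sum over $v \in V$, and then read off the three resulting totals from the global counts of tetrahedra, $2$--faces and edges of $\tri$.

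First I would fix notation and establish three bijections. Let $n_v$, $e_v$, $f_v$ denote the numbers of vertices, edges and triangles of $\tri_v$. The triangles of $\tri_v$ correspond bijectively to the vertices of tetrahedra of $\widetilde{\Delta}$ that are identified to $v$ in $P$; the edges of $\tri_v$ correspond bijectively to the corners at $v$ of the $2$--faces of $P$; and the vertices of $\tri_v$ correspond bijectively to the ends at $v$ of the edges in $E$. Since $\widetilde{\Delta}$ contains $4|T|$ tetrahedron-vertex incidences, $P$ contains $2|T|$ two-faces each carrying three corners, and the edges in $E$ have $2|E|$ ends in total, summing over $v\in V$ yields
\[
\sum_{v\in V} f_v \;=\; 4|T|, \qquad \sum_{v\in V} e_v \;=\; 6|T|, \qquad \sum_{v\in V} n_v \;=\; 2|E|.
\]

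Summing the identity $n_v - e_v + f_v = 2 - 2g_v$ over $v \in V$ then gives
\[
4|T| - 6|T| + 2|E| \;=\; 2|V| - 2\sum_{v\in V} g_v,
\]
which rearranges to $|T| - |E| + |V| = \sum_{v\in V} g_v$, as required.

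The only real subtlety is bookkeeping in the three bijections in the presence of identifications in $P$: an edge of $P$ with both ends at the same vertex $v$ must contribute two vertices to $\tri_v$, and a $2$--face with several corners identified to a single vertex must contribute that many edges to the corresponding link. The three counts above record these multiplicities correctly, and this is really the only thing that needs verification; modulo this careful bookkeeping, the argument is routine.
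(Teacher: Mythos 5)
Your proof is correct and is exactly the ``direct Euler characteristic calculation'' the paper alludes to but does not write out. The three incidence counts ($4|T|$ link triangles from tetrahedron corners, $6|T|$ link edges from the $3\cdot 2|T|$ corners of the $2$--faces of $P$, $2|E|$ link vertices from edge ends) are the right bookkeeping, and summing $\chi(\Lk(v))=2-2g_v$ over $V$ gives the identity.
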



\subsection{Quadrilateral index}

Let $\sigma$ be a 3--simplex. A \emph{quadrilateral type} $q$ in $\sigma$ is a partition of its set of vertices into two sets of cardinality two. The name alludes to its geometric realisation as a properly embedded quadrilateral disc separating a pair of opposite 1--simplices $e_0$ and $e_1$. See \cite{ti} for an exposition of this well-known geometric viewpoint, which goes back to Haken~\cite{H}. It will be convenient to regard a quadrilateral type as a set containing these two opposite 1--simplices, and we write
$$ \{ e_0, e_1\} = q  < \sigma$$
and say that $e_0$ and $e_1$ \emph{face} $q.$

There are precisely three quadrilateral types in $\sigma.$ There is a natural action of the symmetric group $\text{Sym}(4)$ on the set of vertices of $\sigma$. Choose an orientation of $\sigma.$ The alternating group $\text{Alt(4)}$ fixes the orientation and permutes the three quadrilateral types. The stabiliser of a quadrilateral type is the Klein four group $K.$ So there is a natural faithful action of the cyclic group $C_3 \cong \text{Alt(4)}/K$ on the set of all quadrilateral types, and hence a natural cyclic order on that set. Throughout, $q, q', q''$ denote the three quadrilateral types in the 3--simplex $\sigma$ and the action of $C_3$ is indicated by the prime mark, so $(q')' = q''$ and $(q'')' = q$ and the natural cyclic order is therefore given by $q \rightarrow q' \rightarrow q'' \rightarrow q.$ 

Denote $\Box$ the set of all normal quadrilateral types in $\widetilde{\Delta}.$ 
The quotient map $p\co \widetilde{\Delta} \to P$ induces a natural map $p\co \widetilde{\Delta}^{(1)} \to P^{(1)}=E.$
For any $e \in E$ and $q \in \Box,$ the number of edges in the preimage  $p^{-1}(e) \subset \widetilde{\Delta}^{(1)}$ facing $q$ is:
$$i(q,e) =|q \cap p^{-1}(e)| \in \{0, 1, 2\}.$$ If $i(q,e)>0,$ we say that $e$ faces $q,$ and write $q \sim e.$


\subsection{Cone-deformation variety}
\label{sec:Cone-deformation variety}

The \emph{cone-deformation variety} $\D (\tri; \star)$ is the set of all $(z, \xi) \in \C^{\Box}\times (S^1)^E$ satisfying:
\begin{itemize}
\item[$i)$] for each edge $e \in E$,
\begin{equation} \label{glueing_eq}
\prod_{q \in \Box} z(q)^{i(q,e)}=\xi(e),
\end{equation}
\item[$ii)$] for each $q \in \Box$,
\begin{equation} \label{param_eq}
z(q') = \frac{1}{1-z(q)}.
\end{equation}
\end{itemize}
Equation $(\ref{param_eq})$ is the \emph{parameter relation} for $q$. Applying the cyclic ordering gives:
$$z(q'') = 1 - \frac{1}{z(q)} = \frac{z(q) - 1}{z(q)} \qquad\text{and}\qquad z(q)z(q')z(q'')=-1.$$
Equation $(\ref{glueing_eq})$ is the \emph{cone-hyperbolic gluing equation} for $e.$ Multiplying all of these equations gives the identity
$$\prod_{e\in E} \xi(e) = 1.$$
If $\xi(e)=1,$ then equation $(\ref{glueing_eq})$ is the usual \emph{hyperbolic gluing equation} of $e.$ 

We have the projections onto the factors 
$$s\co \D (\tri; \star) \to \C^{\Box} \qquad \text{and} \qquad  c \co \D (\tri; \star)\to (S^1)^E,$$
where $s$ gives the \emph{shapes} of the tetrahedra and $c$ gives the \emph{curvature} at the edges.

Denote the upper half plane in $\C$ by $\H$ and for each $\xi \in (S^1)^E$ define 
$$\D^+(\tri; \xi) = \D (\tri; \star) \; \cap \; ( \H^{\Box}\times\{\xi\}).$$ 
It was observed in \cite{st} that the cone-deformation variety is non-empty for any triangulation since one may choose each $z(q)=\frac{1}{2}(1+\sqrt{-3})$ to be the shape of the regular hyperbolic ideal  3--simplex, and this turns $N$ into a complete hyperbolic cone-manifold. Hence there always exists $\xi \in (S^1)^E$ such that $\D^+(\tri; \xi)\neq \emptyset.$

One can study the (topological) connected components of $\D^+(\tri; \xi)$ via the set of all $\kappa\in \RR^E$ with the property that $z\in  \H^{\Box}$ satisfies the parameter relations and for each edge $e,$ we have:
\begin{equation} \label{glueing_eq_logs}
\sum_{q \in \Box} {i(q,e)} \; \log(z(q))=\kappa(e),
\end{equation}
and
\begin{equation}
\xi(e) = \exp(i\kappa(e)).
\end{equation}
Throughout this paper, $\log$ is the standard branch on $\C \setminus ( - \infty,0 ]$ unless stated otherwise. It follows from analytic continuation that on each connected component of $\D^+(\tri; \xi),$ the left-hand side of (\ref{glueing_eq_logs}) is constant.

Below Corollaries~\ref{for:level set is smooth} and \ref{cor:parametrication} imply that if $\D^+(\tri; \xi)$ is non-empty, then it is 
a smooth complex manifold of dimension the sum of the genera of the vertex links, and each of its components has a holomorphic parametrisation by the holonomies of peripheral elements, one for each genus. This will be proved using the more general \emph{complex-curvature} and \emph{log-curvature} maps defined in \S\ref{sec:log-curvature map}, which respectively generalise the left-hand sides of (\ref{glueing_eq}) and (\ref{glueing_eq_logs}).


\subsection{The log-curvature map}
\label{sec:log-curvature map}

Let $\H$ be the upper half plane and $Z = \{ z \in \H^{\Box} | z(q') = \frac{1}{1-z(q)} \}$.\\
The \emph{log-curvature map} $G\co Z \rightarrow \C^E$ is defined by:
$$
G(z)(e) = \sum_{q \in \Box} i(q,e) \log(z(q)).
$$

The log-curvature map is, of course, closely related to the complex-curvature map $c \co Z \rightarrow \C^E$
$$
c(z)(e) = \prod_{q \in \Box} z(q)^{i(q,e)}.
$$
For instance, a well-known Euler characteristic argument shows that if the link of each vertex is a torus, then $G^{-1}(2\pi i, \ldots, 2\pi i) = c^{-1}(1, \ldots, 1).$ In general, $c^{-1}(u)$ is a countable union of sets of the form $G^{-1}(u_k),$ and these sets are pairwise disjoint by analytic continuation. But since $c^{-1}(u)$ is an affine algebraic set, at most finitely many of these sets will be non-empty. Hence each level set $G^{-1}(u_k)$ of the log-curvature function is also an affine algebraic set.


\section{Statements and proofs of the main results}

Our central result is Theorem~\ref{injective_thm}, generalising Choi's main technical result \cite[Theorem 4.13]{ch} to pseudo-manifolds with positively oriented triangulations and prescribed log-curvature.


\subsection{Rank of the log-curvature map}

Fixing a preferred quadrilateral type $q < \sigma$ for every 3--simplex, there is a natural identification between $Z$ and $\H^T$ via the projection map $\pi\co Z \rightarrow \H^T$, where
$$
\pi(z)(\sigma) = z(q).
$$
Hence the log-curvature map can and will be viewed as a map $G: \H^T \rightarrow \C^E$. The following result is a corollary of \cite[Theorem 4.1]{ne}:

\begin{thm}[Neumann] \label{thm_dimension}
$\diff G$ has constant rank $\displaystyle |T| - \sum_{v\in V} g_v$.
\end{thm}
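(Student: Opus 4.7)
The plan is to identify $dG$ with a matrix whose rank is computed by Neumann's Theorem 4.1 of \cite{ne}. First, with shape coordinates $w_\sigma := z(q_\sigma) \in \H$ (after fixing a preferred quadrilateral type per tetrahedron, so that $Z \cong \H^T$ via $\pi$), the chain rule together with the parameter relations $z(q') = 1/(1-z(q))$ and $z(q'') = 1 - 1/z(q)$ gives
$$
\frac{\partial G(z)(e)}{\partial w_\sigma} = \frac{i(q_\sigma, e)}{w_\sigma} + \frac{i(q'_\sigma, e)}{1 - w_\sigma} + \frac{i(q''_\sigma, e)}{w_\sigma(w_\sigma - 1)}.
$$

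The second step is a rescaling that renders the matrix combinatorial. Since $w_\sigma \in \H$, the factor $w_\sigma(w_\sigma - 1)$ is nonzero, so multiplying column $\sigma$ of the Jacobian by $w_\sigma(w_\sigma - 1)$ (an invertible diagonal change of coordinates on the source) preserves the rank. The resulting matrix $M(w)$ has entries
$$
M(w)_{e, \sigma} = w_\sigma \bigl[ i(q_\sigma, e) - i(q'_\sigma, e) \bigr] + \bigl[ i(q''_\sigma, e) - i(q_\sigma, e) \bigr],
$$
an integer-affine function of $w_\sigma$ whose coefficients are determined entirely by the combinatorics of $\tri$.

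The third step is to invoke \cite[Theorem 4.1]{ne}. Neumann's result is precisely a rank computation for matrices of this form, asserting that $\operatorname{rank} M(w) = |E| - |V|$ for every $w \in \H^T$. Combining with Lemma \ref{dimension_count}, which gives $|E| - |V| = |T| - \sum_{v \in V} g_v$, completes the proof.

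The main obstacle is the last step: translating Neumann's framework (built from chain complexes associated to the induced triangulations $\tri_v$ of the vertex link surfaces $\Lk(v)$) into the present conventions, and verifying that his constancy-of-rank argument applies to $M(w)$ uniformly in $w$. The correction $|V|$ in Neumann's rank formula is topological in origin, arising from the zeroth Betti number of the disjoint union of the vertex links and the ensuing linear dependence among the gluing equations at edges around each vertex; it is this topological interpretation of the corank that forces $\operatorname{rank} M(w)$ to be independent of $w$ and closes the argument.
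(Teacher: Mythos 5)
Your steps 1 and 2 are correct: the column rescaling by $w_\sigma(w_\sigma-1)$ is rank-preserving, and the resulting matrix $M(w)$ has entries affine in $w_\sigma$ with integer coefficients determined by the quadrilateral indices. In fact, writing $B$ for the $2|T|\times|E|$ integer matrix used in the paper, one has exactly $M(w) = B^T D(w)$ where $D(w)$ is the $2|T|\times|T|$ matrix whose $\sigma$-column has $(w_\sigma-1)$ in row $2\sigma$, $-1$ in row $2\sigma+1$, and $0$ elsewhere. The gap is in step 3. Neumann's Theorem 4.1 is a statement about the integer matrix $B$ (equivalently the $\Z$-module map $\beta$); it gives $\rank B = |T|-\sum_v g_v$. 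It is not a statement about the family of complex matrices $M(w)$, and it does not assert that $\rank_\C M(w)$ takes this value for every $w\in\H^T$. The factorization $M(w)=B^T D(w)$ only yields the inequality $\rank M(w) \le \rank B$, since $D(w)$ compresses each pair of columns of $B^T$ into a single $w$-dependent combination, and that combination could a priori land in $\ker B^T$ for special $w$. The missing ingredient is precisely the separate lemma, cited in the paper as Neumann--Zagier, that $\rank \diff G = \rank B$ for all $w\in\H^T$; its proof uses the symplectic relations satisfied by the rows of $B$ and the fact that all $w_\sigma$ lie in $\H$.

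Your closing paragraph flags the constancy-of-rank issue but proposes the wrong mechanism. The $|V|$ linear dependences among the rows of $M(w)$ coming from the vertices (what you call the ``zeroth Betti number'' contribution) give only the \emph{upper} bound $\rank M(w)\le |E|-|V|$; they cannot ``force $\rank M(w)$ to be independent of $w$,'' because lower semicontinuity of rank leaves open the possibility of a drop on a nonempty analytic subset of $\H^T$. The matching lower bound, uniformly in $w$, is exactly what the Neumann--Zagier symplectic argument provides. The paper's proof keeps the two ingredients cleanly separated: first $\rank\diff G=\rank B$ by citation of [NZ], then $\rank B = |T|-\sum_v g_v$ from Neumann's Theorem 4.1 combined with Lemma~\ref{dimension_count}. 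You should do the same, or else supply the symplectic argument directly for $M(w)$.
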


\begin{proof}
For each tetrahedron $\sigma_i \in T$, fix a quadrilateral type $q_i<\sigma_i$. Using the quadrilateral index, we let
$B=(b_{ij})$ be the $2|T| \times |E|$ matrix, where for all even $i$:
$$
\begin{cases}
b_{ij} = i(q_i,e_j) - i(q_i',e_j),\\
b_{i+1 j} = i(q_i',e_j) - i(q_i'',e_j).
\end{cases}
$$
It is well known that $\diff G$ has the same rank as $B$ (see \cite{NZ}), hence it is enough to show that $\rank B = |T| - \sum_{v\in V} g_v$. This is the contents of \cite[Theorem, 4.1]{ne}. Neumann defines a linear map $\beta: C_1 \rightarrow J$, where $C_1$is the free $\Z$--module generated by $E$ and $J$ is the free $\Z$--module generated by $T^2$, and shows that 
$$
\dim( \Ker \beta^* / \ima \beta ) = \dim \bigoplus_{v \in V} \Homo_1(\Lk(v) ) = \displaystyle \sum_{v \in V} 2 g_v,
$$
where $\beta^*$ is the dual map. Moreover, Neumann shows $\Ker \beta^* = 2|T| - |E| + |V|$, and therefore
$$
\dim( \ima \beta ) = \dim( \Ker \beta^*) - \dim \bigoplus_{v \in V} \Homo_1(\Lk(v) ) = 2|T| - |E| + |V| - \sum_{v \in V} 2 g_v.
$$
Using particular bases of $C_0$ and $J$, $B$ is the matrix associated to $\beta,$ giving
$$
\rank B = \dim( \ima \beta ) = 2|T| - |E| + |V| - \sum_{v \in V} 2 g_v = T - \sum_{v \in V}  g_v,
$$
where the last equality follows from Lemma \ref{dimension_count}.
\end{proof}

Theorem \ref{thm_dimension} and the implicit function theorem imply the following result.

\begin{cor}\label{for:level set is smooth}
 For all $u \in \C^E$, the complex variety $G^{-1}(u)$ is either empty or a smooth complex manifold of dimension $\displaystyle \sum_{v \in V}  g_v$.
\end{cor}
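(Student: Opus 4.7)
The plan is to deduce this corollary directly from Theorem~\ref{thm_dimension} via the holomorphic version of the constant rank theorem. Since $\H$ is open in $\C$ and each coordinate function $z \mapsto \log z(q)$ is holomorphic on $\H$, the map $G \co \H^T \to \C^E$ defined by $G(z)(e) = \sum_{q} i(q,e)\log(z(q))$ is holomorphic on the complex manifold $\H^T$ of complex dimension $|T|$.

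First I would invoke Theorem~\ref{thm_dimension} to record that the $\C$--linear map $\diff G_z \co T_z \H^T \to T_{G(z)} \C^E$ has the same rank $r := |T| - \sum_{v \in V} g_v$ at every point $z \in \H^T$. Next I would fix $u \in \C^E$ with $G^{-1}(u) \neq \emptyset$ and pick any $z_0 \in G^{-1}(u)$. By the constant rank theorem for holomorphic maps between complex manifolds, there exist an open neighbourhood $U \subset \H^T$ of $z_0$, an open neighbourhood $V \subset \C^E$ of $u$, and biholomorphisms $\varphi \co U \to U' \subset \C^{|T|}$ and $\psi \co V \to V' \subset \C^{|E|}$ with $\varphi(z_0) = 0$, $\psi(u) = 0$, such that
\[
\psi \circ G \circ \varphi^{-1}(w_1, \ldots, w_{|T|}) = (w_1, \ldots, w_r, 0, \ldots, 0).
\]
Consequently $G^{-1}(u) \cap U$ is carried by $\varphi$ to the slice $\{w_1 = \cdots = w_r = 0\}$, which is a complex submanifold of $U'$ of complex dimension $|T| - r = \sum_{v \in V} g_v$. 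Since this holds at every point $z_0 \in G^{-1}(u)$, the fibre $G^{-1}(u)$ is a smooth complex submanifold of $\H^T$ of the claimed dimension.

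There is no real obstacle: the only mildly delicate point is that $\diff G$ is not known to be surjective, so one cannot apply the ordinary implicit function theorem verbatim. This is precisely why the constant rank hypothesis furnished by Theorem~\ref{thm_dimension} is essential, as it supplies the normal form above and hence the smooth submanifold structure on every non-empty level set.
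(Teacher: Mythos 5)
Your proof is correct and follows essentially the same route as the paper, which simply cites Theorem~\ref{thm_dimension} together with ``the implicit function theorem.'' You have usefully sharpened the paper's one-line invocation: since $\diff G$ need not be surjective, what is really needed is the constant rank (rank normal form) theorem for holomorphic maps, and your explicit normal form $\psi \circ G \circ \varphi^{-1}(w) = (w_1,\ldots,w_r,0,\ldots,0)$ with $r = |T| - \sum_v g_v$ correctly exhibits $G^{-1}(u)$ as a complex submanifold of dimension $|T|-r = \sum_v g_v$.
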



\subsection{Boundary Map}
Let $\alpha$ be an oriented closed normal curve on $\Lk(v)$, representing a non--trivial element in $H_1(\Lk(v))$ and let $t \in \tri_v^{(2)}$ be a normal triangle contained in tetrahedron $\sigma \in T$. 
A segment of $\alpha$ with respect to $t$ is an oriented connected component of $\alpha \cap t$. Let $S_{\alpha}^t$ be the set of all segments of $\alpha$ with respect to $t$, and $\displaystyle S_{\alpha} = \bigcup_{t \in \tri_\partial} S_{\alpha}^t$ be the set of all segments of $\alpha$.

\begin{figure}[ht]
\centering
\includegraphics[width=0.3\textwidth]{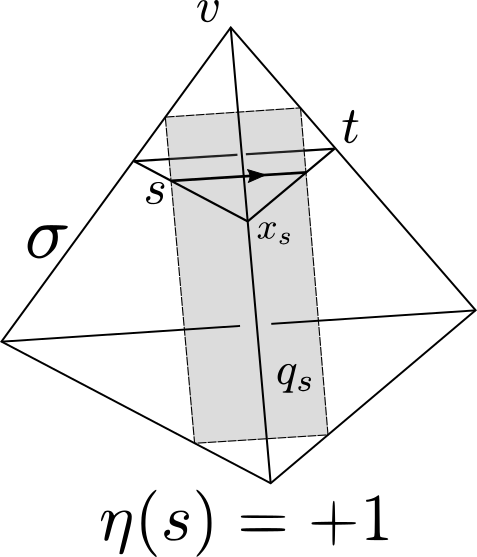} \qquad \qquad   \includegraphics[width=0.3\textwidth]{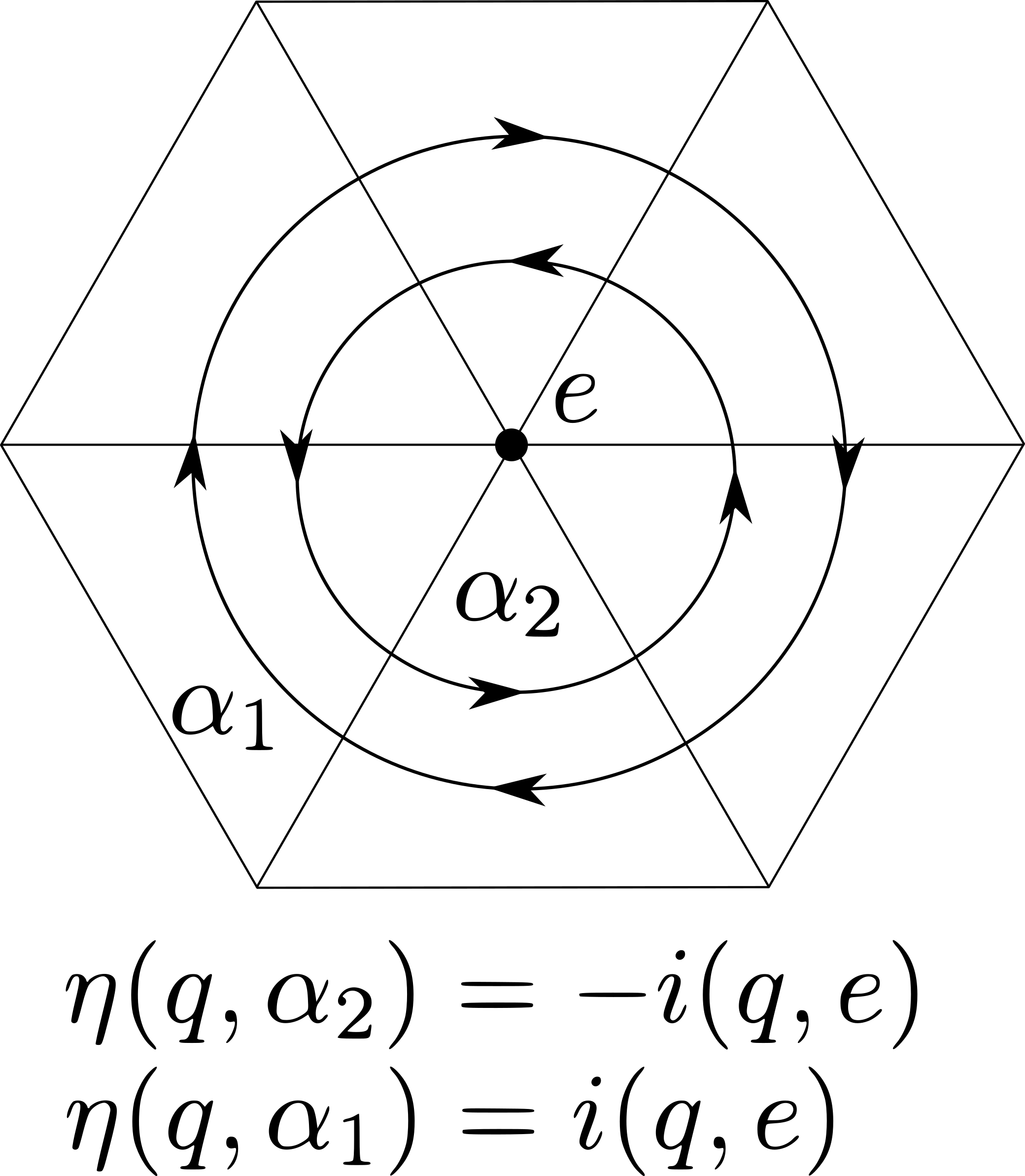}
\caption{On the left: Each $s \in S_\alpha$ determines a quadrilateral type $q_s$ and isolates a vertex $x_s$. For an observer sitting in the cusp, $x_s$ here lies on the right side of $s$. On the right: $\alpha_1,\alpha_2$ encircle the endpoint of an edge $e \in E$.}
\label{tetrahedron_norm_quad}\label{alpha_around_edge}
\end{figure}


Each $s \in S_\alpha$ uniquely determines a quadrilateral type $q \in \Box$ such that $s \subset q$, which will be denoted by $q_s$. If $s \in S^t_\alpha$, $s$ divides $t$ in two regions and isolates one of the three vertices, say $x_s$ (Figure \ref{tetrahedron_norm_quad}). With respect to the orientation of $s$ and the induced orientation on $\Lk(v)$, it makes sense to say that $x_s$ lies on the right side or on the left side of $s$ when viewed from the cusp. We therefore define
$$
\ind(s) =
\begin{cases}
+1 & \text{ if } x_{s} \text{ lies on the right side of } s, \\
-1 & \text{ if } x_{s} \text{ lies on the left side of } s,
\end{cases}
$$
$$
\ind(q,s) =
\begin{cases}
\ind(s) & \text{ if } q = q_s, \\
0 & \text{ otherwise },
\end{cases}
$$
and finally
$$
\ind(q,\alpha) = \sum_{s \in S_\alpha}\ind(q,s).
$$

In particular, when $\alpha$ encircles one endpoint of an edge $e \in E$, then $\eta(q,\alpha) = \pm i(q,e)$ for all $q\in \Box$, where the sign depends on whether $\alpha$ is oriented anticlockwise or clockwise with respect to the cusp (Figure \ref{alpha_around_edge}).

Hence we can write the log holonomy of $\alpha$ as $\h_{\alpha}: Z \rightarrow \C $,
$$
\h_{\alpha}(z) = \sum_{q \in \Box}\ind(q,\alpha) \log z(q).
$$
Notice that if $\alpha_1$ is normally isotopic to $\alpha_2$, then $\h_{\alpha_1}(z) = \h_{\alpha_2}(z)$, and if $\alpha_1$ is the union of $n$ normal curves all normally isotopic to $\alpha_2$, then $\h_{\alpha_1}(z) = n\h_{\alpha_2}(z).$ We therefore have a natural 
extension of the log holonomy to $\n_{\RR}$, the $\RR$--vector space with basis the normal isotopy classes of curves on $\Lk(v)$. For all $a,b \in \RR$ and $\alpha,\beta \in \n_{\RR}$ define
$$
\h_{a \alpha + b \beta}(z) := a \h_{ \alpha}(z) + b \h_{ \beta}(z).
$$
Note that if $\alpha^{-1}$ is the closed normal curve $\alpha$ with opposite orientation, then $\h_{\alpha + \alpha^{-1}}(z) = 0$. 

If $G(z) \not= (2\pi i,\dots,2\pi i) $, then the \emph{value} of $\h_{\alpha}(z)$ may not be an invariant of the homology class of $\alpha,$ but depends on the choice of normal representative since an isotopy pushing $\alpha$ over a vertex in $\Lk(v)$ may change its value. 

For each link $\Lk(v_i),$ let $g_i$ denote its genus and choose a canonical homotopy group generating set $\l_i \cup \m_i,$ where $\l_i = \{ \lambda^i_1,\dots,\lambda^i_{g_i} \}$ is the \emph{set of longitudes} and $\m_i = \{ \mu^i_1,\dots,\mu^i_{g_i} \}$ the \emph{set of meridians}, indexed and oriented such that the algebraic intersection number satisfies $\iota(\lambda^i_j,\mu^i_j)=1$ for all $j$ and $\iota(\alpha,\beta)=0$ for all other pairs of elements $\alpha, \beta \in \l_i \cup \m_i.$ Moreover, we assume that each element of $\l_i \cup \m_i$ is an oriented normal curve on $\Lk(v_i).$

Let $\l = \bigcup_i \l_i$ and define the \emph{boundary map} $H_{\l}: Z \rightarrow \C^{\l} $ by
$$
H_{\l}(z)(\lambda) = \h_{\lambda}(z) \qquad \mbox{ for all } \  \lambda \in \l.
$$
Similarly, we let $\displaystyle \m = \bigcup_i \m_i$ and $\displaystyle \Lk(V) = \bigsqcup_i \Lk(v_i)$. Hence  $\l\cup\m$ is a generating set for $H_1(\Lk(V))$.


\subsection{Tangential angle structures}

Our study of derivatives naturally leads us (at least implicitly) to the tangent space of the space of all angle structures on $\tri.$ Following \cite{Lu}, the space of all \emph{tangential angle structures} $\TAS=\TAS(\tri)$ is the set of all $\alpha \in \RR^{\Box}$ such that
\begin{itemize}
\item $\displaystyle \sum_{q < \sigma} \alpha(q) = 0$, $ \forall\sigma \in T$, and
\item $\displaystyle\sum_{q \in \Box} i(q,e)\alpha(q)=0 $, $\forall e \in E$.
\end{itemize}
The endgame of the proof of Theorem~\ref{injective_thm} uses a specific spanning set of
$$
\STAS = \STAS(\tri) = \{ w \in \TAS  \ | \ \sum_{q \in \Box}\ind(q,\lambda) w(q) = 0 \ \forall \lambda \in \l \} \le \TAS,
$$
which we will now determine. For every edge $e \in E$ and every normal curve $\gamma$ on a vertex link, let $Q_e,Q_{\gamma} \in \RR^{\Box}$ be defined by
\begin{align*}
&Q_e = \sum_{q:q\sim e} (q')^* - (q'')^*,\\
&Q_{\gamma} = \sum_{q \in \Box}\ind(q,\gamma) \left(\;(q')^* - (q'')^*\; \right).
\end{align*}
In their study of angle structure on cusped manifolds,
Futer and Gu\'eritaud~\cite[Section $4$]{FuGu} introduced $Q_e$ and $Q_\gamma$ under the names of \emph{leading--trailing deformation around $e$} and \emph{leading--trailing deformation along $\gamma$} respectively. 
In fact, for every edge $e$, one may choose a normal closed curve $\rho$ about one endpoint of $e$ and an orientation of $\rho$ such that $Q_e = Q_{\rho}.$ Hence $Q_{e}$ can be thought of as a leading--trailing deformation along $\rho.$
Some of the results of \cite{FuGu} extend directly to the more general setting of an oriented pseudo-manifold.
As in \cite[Lemma 4.5]{FuGu}, we have
$
Q_e, Q_{\gamma}\in \TAS.
$

\begin{lem} \cite[Lemma 4.4]{FuGu} \label{FuGu_lem}
 Let $\alpha,\beta$ be oriented  closed normal curves on $\Lk(V)$ that intersect transversely, if at all. Then
$$
\frac{\partial}{\partial Q_\beta} \im( \h_{\alpha} ) =\sum_{q \in \Box}\ind(q,\alpha) Q_{\beta}(q) = 2 \ \iota(\alpha,\beta).
$$
\end{lem}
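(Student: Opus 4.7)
The plan is to prove the two equalities separately.

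The first equality is essentially a differentiation: writing $z(q) = |z(q)|\,e^{i\theta(q)}$ with $\theta(q) \in (0,\pi)$, one has $\im \h_\alpha(z) = \sum_q \ind(q,\alpha)\,\theta(q)$. Since $Q_\beta \in \TAS \subset \RR^\Box$ is naturally interpreted as a tangent vector to $\H^\Box$ whose $q$-component prescribes the infinitesimal variation of $\theta(q)$, the directional derivative of $\im \h_\alpha$ along $Q_\beta$ is $\sum_q \ind(q,\alpha)\,Q_\beta(q)$, which is the first equality.

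For the second equality, I would substitute the definition of $Q_\beta$, use $(r')^\ast(q) = \delta_{r,q''}$ and $(r'')^\ast(q) = \delta_{r,q'}$ (consequences of $q''' = q$), and expand $\ind(\cdot,\alpha)$ and $\ind(\cdot,\beta)$ as sums over normal segments to obtain
$$\sum_q \ind(q,\alpha)\,Q_\beta(q) \;=\; \sum_q \ind(q,\alpha)\bigl[\ind(q'',\beta) - \ind(q',\beta)\bigr] \;=\; \sum_{(s_\alpha,s_\beta)\in S_\alpha\times S_\beta} \ind(s_\alpha)\,\ind(s_\beta)\,\varepsilon(s_\alpha,s_\beta),$$
where $\varepsilon(s_\alpha,s_\beta)$ is $+1$ if $q_{s_\beta} = q_{s_\alpha}''$, $-1$ if $q_{s_\beta} = q_{s_\alpha}'$, and $0$ otherwise. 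The task then becomes identifying this combinatorial double sum with $2\iota(\alpha,\beta)$.

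The key step is to localise the sum at transverse crossings. Walking along $\beta$, the leading--trailing terms $(q_{s_\beta}')^\ast - (q_{s_\beta}'')^\ast$ attach weights to the two tetrahedra adjacent to $s_\beta$'s leading and trailing edges; when paired against an $s_\alpha$ sitting in a normal triangle where $\alpha$ does not actually cross $\beta$, the contributions of consecutive segments $s_\beta, s_\beta^{\mathrm{next}}$ come with opposite cyclic signs and telescope, so such pairs cancel. The remaining non-zero terms are in bijection with transverse intersections $p \in \alpha \cap \beta$: at each such $p$ lying in a normal triangle $t$, the segments $s_\alpha\in S_\alpha^t$ and $s_\beta\in S_\beta^t$ are forced to isolate two distinct vertices of $t$, and a direct case check using the cyclic order on $\Box$ (inherited from the orientation of $\sigma$ via the action of $\mathrm{Alt}(4)/K$) together with the ``isolated vertex on the right'' convention for $\ind(s)$ produces a local contribution $2\,\mathrm{sgn}(p)$. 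Summing over $p$ yields $2\iota(\alpha,\beta)$.

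The hard part will be the telescoping/cancellation step together with pinning down the factor of $2$ at each crossing. A cleaner backup plan, should the direct bookkeeping prove unwieldy, is to observe that both sides of the asserted identity are bilinear and antisymmetric in $(\alpha,\beta)$ (the antisymmetry of the left-hand side follows from the cyclic identity $\sum_q f(q)g(q'') = \sum_q g(q)f(q')$ implied by $q''' = q$), and are invariant under normal homotopies of either curve across a vertex of the link: such a move changes $\ind(\cdot,\alpha)$ by $\pm i(\cdot,e)$ for the incident edge $e$, and the resulting correction $\sum_q i(q,e)\,Q_\beta(q)$ vanishes because $Q_\beta\in\TAS$ satisfies the edge relation $\sum_q i(q,e)\,w(q)=0$. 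The identity then descends to $H_1(\Lk(V))$, where it can be checked on the canonical symplectic basis $\l\cup\m$ with representatives meeting transversely at a single point.
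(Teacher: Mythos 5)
The paper's own ``proof'' of this lemma is a one-sentence citation: it observes that the argument in \cite[Lemma~4.4]{FuGu} nowhere uses the fact that the vertex links are tori (which Futer--Gu\'eritaud implicitly assume, working with cusped manifolds), so it applies verbatim to pseudo-manifolds with higher-genus links. There is therefore no argument in the paper itself to compare against; the substance lives in \cite{FuGu}, and your proposal is an attempt to reconstruct it.

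Your formal setup is correct: the reduction of the first equality to differentiating $\im\h_\alpha(z)=\sum_q\ind(q,\alpha)\arg z(q)$ in the angle coordinates is consistent with the identification $Z\cong X$ used later in the paper; the simplification $Q_\beta(q)=\ind(q'',\beta)-\ind(q',\beta)$ is right, as is the expansion into a double sum over segment pairs; and your antisymmetry check via the cyclic substitution $q\mapsto q'$ is sound. The genuine gap is that the actual content of the lemma---the cancellation of contributions from segment pairs that do not witness a transverse crossing, and the local contribution $2\,\mathrm{sgn}(p)$ at each crossing $p$---is explicitly deferred as ``the hard part'' and never carried out; this is precisely the bookkeeping \cite[Lemma~4.4]{FuGu} performs, and stating the plan is not the same as doing it. The backup plan does not remove this load-bearing step: to reduce to a symplectic basis you must evaluate the form both on a pair meeting once transversely \emph{and} on disjoint basis pairs (e.g.\ $\lambda_i,\lambda_j$ or $\lambda_i,\mu_j$ with $i\neq j$, or pairs living on different vertex links); disjoint curves can still produce segment pairs sharing a tetrahedron or even a normal triangle, so showing those values vanish requires the same telescoping you deferred. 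In short, both of your plans correctly outline the strategy but leave the essential local combinatorial computation undone.
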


\begin{proof}
The proof in \cite[Lemma 4.4]{FuGu} does not use the fact that $\partial M$ is a union of tori (and in particular that in this case $\h_{\alpha}$ is independent of the choice of normal curve in a homology class) and applies verbatim. 
\end{proof}

\begin{lem} \label{dimension_Qe}
The set $\{Q_e\}_{e \in E}$ spans a subspace of dimension $|T| - \sum_i g_i$.
\end{lem}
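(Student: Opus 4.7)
The plan is to reduce the rank count to the rank of Neumann's matrix $B$ already computed in the proof of Theorem~\ref{thm_dimension}, where $\rank B = |T|-\sum_i g_i$.

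First I would write $Q_e$ explicitly in the standard basis of $\RR^\Box$. Reading the sum $\sum_{q:q\sim e}$ with the natural multiplicity $i(q,e)$ (consistent with the formula for $Q_\gamma$ and with the identification $Q_e=Q_\rho$ for a small normal loop $\rho$ encircling an endpoint of $e$, together with the fact that $\ind(q,\rho)=\pm i(q,e)$), and using that the cyclic action satisfies $q'=r \Leftrightarrow q=r''$, one obtains
$$ Q_e(r) = i(r'',e) - i(r',e) \qquad\text{for every } r\in\Box. $$
In particular $Q_e(q)+Q_e(q')+Q_e(q'')=0$ on each tetrahedron $\sigma$, so the span of $\{Q_e\}$ sits inside the subspace $W\subset\RR^\Box$ of vectors summing to zero on each $\sigma$, which has dimension $2|T|$.

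Next, fix preferred quadrilaterals $q_i<\sigma_i$ as in the proof of Theorem~\ref{thm_dimension} and consider the coordinate projection $\pi\colon W\to\RR^{2|T|}$ onto the pairs $\bigl(q_i^*,(q_i'')^*\bigr)$; since the missing coordinate is recovered as minus the sum, $\pi$ is a linear isomorphism. The key step is then to compare $\pi(Q_{e_j})$ with the $j$-th column of $B$. Using the explicit formula above,
$$Q_{e_j}(q_i)= i(q_i'',e_j)-i(q_i',e_j) = -b_{2i,j}, \qquad Q_{e_j}(q_i'')= i(q_i',e_j)-i(q_i,e_j) = -b_{2i-1,j},$$
so $\pi(Q_{e_j})$ is obtained from the $j$-th column of $B$ by swapping the two entries within each tetrahedron-block and negating. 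This is an invertible linear automorphism of $\RR^{2|T|}$, and therefore
$$\dim\Span\{Q_e\}_{e\in E} = \dim\Span\{\pi(Q_{e_j})\}_j = \rank B = |T|-\sum_i g_i.$$

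The substantive content lies in the explicit matching between the coordinates of $Q_{e_j}$ and the entries of $B$; the rest of the argument is essentially automatic once the sum-zero condition is observed. I do not anticipate any genuine obstacle beyond careful bookkeeping with the cyclic action on quadrilateral types and the convention that the sum in the definition of $Q_e$ is read with multiplicity.
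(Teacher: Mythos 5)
Your proof is correct, and it takes a genuinely different route from the paper. The paper forms a new matrix whose \emph{rows} are the vectors $Q_{e_i}$, and identifies its row null space by adapting \cite[Lemma 3.3]{FuGu}: the vectors $r_v\in\RR^{|E|}$ recording, for each vertex $v$, how many endpoints each edge has at $v$ are shown to span the null space, giving $\rank = |E|-|V|$, which Lemma~\ref{dimension_count} converts to $|T|-\sum_i g_i$. You instead project each $Q_e$ onto the two coordinates $\bigl(q_i^*,(q_i'')^*\bigr)$ per tetrahedron (an isomorphism on the sum-zero subspace) and match the resulting vectors, up to an obvious block-diagonal swap-and-negate automorphism of $\RR^{2|T|}$, with the columns of the Neumann matrix $B$ whose rank was already computed in the proof of Theorem~\ref{thm_dimension}. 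This reuses work already done rather than invoking a second external rank computation, and it makes explicit the otherwise implicit coincidence of the two rank counts (Neumann's $\rank B$ and Futer--Gu\'eritaud's $|E|-|V|$). Your reading of the sum $\sum_{q:q\sim e}$ with multiplicity $i(q,e)$ is the correct one: it is forced by the stated identity $Q_e = Q_\rho$ together with $\ind(q,\rho)=\pm i(q,e)$, and with it your formula $Q_e(r) = i(r'',e)-i(r',e)$ and the entrywise matching with the $b_{ij}$ both check out.
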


\begin{proof}
For each tetrahedron $\sigma_j \in T$ fix a normal quadrilateral type $q_j < \sigma_j$. Let $B$ be the $( |E| \times 3|T| )$ matrix whose $i$--th row is the vector $Q_{e_i}$, namely
$$
\left( Q_{e_i}(q_1), Q_{e_i}(q_1'), Q_{e_i}(q_1'')\dots,Q_{e_i}(q_{|T|}), Q_{e_i}(q_{|T|}') , Q_{e_i}(q_{|T|}'') \right),
$$
where $e_i \in E$. For every vertex $v \in V$, we define a row vector $r_v \in \RR^{|E|}$, whose $i$--th entry is the number of endpoints that the $i$--th edge $e_i$ has at the vertex $v$.\\
In \cite[Lemma 3.3]{FuGu}, a matrix $A$ and vectors $r_c$ were constructed in a similar fashion, and they are related to $B$ and $r_v$ as follows:
\begin{itemize}
	\item Let $\sigma \in S_3$ be the permutation $(123)$ and $A_i,B_i$ be the $i$-th columns of $A,B$ respectively. Then for all $0\leq k \leq |T|$ and $1\leq i \leq 3$, $B_{3k + i}$ is the column vector made up of the last $|E|$ entries of $A_{3k + \sigma^{-1}(i)} - A_{3k + \sigma(i)}$;
	\item $r_v$ is the row vector made of the last $|E|$ entries of $r_c$.
\end{itemize}
Following the proof of \cite[Lemma 3.3]{FuGu}, one checks that the vectors $r_v$ form a basis for the row null space of $B$ and therefore $\rank (B) = |E| - |V| $.
 The conclusion follows from Lemma \ref{dimension_count}.
\end{proof}

The following result generalises \cite[Proposition 4.6]{FuGu}. To simplify notation, we write $\{Q_{\lambda}\}= \{Q_{\lambda}\}_{\lambda \in \l}$ etc.

\begin{lem} \label{FuGu_prop}
 The set $\{Q_{\lambda}\}\cup \{Q_{\mu}\}$ is linearly independent, and $\Span \left( \{Q_{\lambda}\} \cup \{Q_{\mu}\} \right) \cap \Span \{Q_e\} = \{ 0 \}$. In particular,
\begin{itemize}
 \item[$i)$] $\dim \Span \{Q_{\mu}\}  = \sum_i g_i,$
 \item[$ii)$] $\dim \Span \left( \{Q_e\} \cup \{Q_{\lambda}\} \right)= |T|,$
 \item[$iii)$] $\Span \left(\{Q_e\}\cup \{Q_{\lambda}\} \cup \{Q_{\mu}\} \right) = \TAS(\tri)$.
\end{itemize}
\end{lem}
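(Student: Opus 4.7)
My plan is to reduce everything to the pairing identity of Lemma~\ref{FuGu_lem}, which reads $\sum_{q \in \Box} \ind(q, \alpha) Q_\beta(q) = 2\iota(\alpha, \beta)$ for oriented normal curves $\alpha, \beta$ on $\Lk(V)$. The crucial additional input is the observation that each $Q_e$ equals $Q_{\rho_e}$ for a small normal loop $\rho_e$ encircling an endpoint of $e$ in the appropriate vertex link; since $\rho_e$ bounds a disk in the closed surface $\Lk(v)$, it has zero algebraic intersection with every closed curve on $\Lk(V)$.

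For the linear independence of $\{Q_\lambda\} \cup \{Q_\mu\}$, I would assume a relation $\sum a^i_j Q_{\lambda^i_j} + \sum b^i_j Q_{\mu^i_j} = 0$ and apply the linear functional $\alpha \mapsto \sum_q \ind(q, \nu)\alpha(q)$ for each $\nu \in \l \cup \m$. Using the canonical pairing $\iota(\lambda^i_j, \mu^{i'}_{j'}) = \delta_{ii'}\delta_{jj'}$ (all other intersections zero), each such evaluation isolates a single coefficient and forces it to vanish. The same strategy handles the trivial intersection claim: if $\sum a^i_j Q_{\lambda^i_j} + \sum b^i_j Q_{\mu^i_j} = \sum c_e Q_e$, then pairing with every $\nu \in \l \cup \m$ annihilates the right-hand side by the null-homotopy observation, while the left-hand side isolates an $a^i_j$ or $b^i_j$, forcing all coefficients to zero.

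The three consequences are then dimension counts. Item (i) is immediate from the independence of $\{Q_\mu\}$ together with $|\m| = \sum_i g_i$. For (ii), the trivial intersection between $\Span \{Q_\lambda\}$ and $\Span \{Q_e\}$ combines with $\dim \Span \{Q_\lambda\} = \sum_i g_i$ and Lemma~\ref{dimension_Qe} to give dimension $|T|$. For (iii), the same reasoning yields $\dim \Span(\{Q_e\} \cup \{Q_\lambda\} \cup \{Q_\mu\}) = |T| + \sum_i g_i$, and since every such vector lies in $\TAS(\tri)$, equality reduces to the identity $\dim \TAS(\tri) = |T| + \sum_i g_i$, which coincides with $2|T| - |E| + |V|$ by Lemma~\ref{dimension_count}. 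I expect this last identity to be the main obstacle: it requires counting $|V|$ independent linear dependencies among the $|T| + |E|$ defining equations of $\TAS(\tri)$ in $3|T|$ coordinates---one dependency for each vertex, arising from the vertex-weighted sum of edge equations being expressible through the tetrahedral angle-sum equations. This computation is standard from Luo's work~\cite{Lu}; once in hand, the equality in (iii) follows purely by dimension.
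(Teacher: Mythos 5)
Your proposal is correct and follows essentially the same route as the paper. The paper packages your collection of linear functionals $\alpha\mapsto\sum_q \ind(q,\nu)\alpha(q)$ for $\nu\in\l\cup\m$ into a single map $I\colon\Span(\{Q_\lambda\}\cup\{Q_\mu\}\cup\{Q_e\})\to\RR^{|\l|+|\m|}$, shows $I(Q_e)=0$ and that $I$ sends the $Q_\lambda$, $Q_\mu$ to distinct standard basis vectors via Lemma~\ref{FuGu_lem}, and then invokes Luo's $\dim\TAS=2|T|-|E|+|V|$ together with Lemma~\ref{dimension_count} for item (iii)---exactly your computation, organized slightly differently.
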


\begin{proof}
	For every edge $e$, choose a normal closed curve $\rho$ about one endpoint of $e$. Then we can choose an orientation on $\rho$ such that $Q_e = Q_{\rho}$ and think of $Q_{e}$ as a leading--trailing deformation along $\rho$.\\
	Let $I : \Span \left( \{Q_{\lambda}\} \cup \{Q_{\mu}\} \cup \{Q_{\rho}\} \right)  \longrightarrow \RR^{|\l|+|\m|}$ be the map defined as follows. If $ X = \sum a_i  Q_{\lambda_i}+ \sum b_j Q_{\mu_j} + \sum c_k Q_{\rho_k}$, set $\xi = \sum a_i  \lambda_i+ \sum b_j \mu_j + \sum c_k \rho_k$ and define
	$$
	I(X) =
	\left( \frac{\partial}{\partial Q_{\lambda_1}} \im( \h_{\xi} ), \dots,  \frac{\partial}{\partial Q_{\lambda_{|\l|}}} \im( \h_{\xi} ), 	\frac{\partial}{\partial Q_{\mu_1}} \im( \h_{\xi} ), \dots,  \frac{\partial}{\partial Q_{\mu_{|\m|}}} \im( \h_{\xi} ) \right).
	$$
	
	The linearity of $\h_v$ implies that $I$ is linear, and by Lemma \ref{FuGu_lem}, we have
	\begin{itemize}
		\item $ I(Q_{\rho}) = (0,\dots,0),$ $i.e.$ $\Span\{Q_e\}  \subset \Ker I$;
		\item $I(Q_{\lambda_i})$ has $1$ in the $(|\l |+i)$--th entry and $0$ everywhere else,\newline $i.e.$ maps to the $(|\l |+i)$--th standard basis vector.
		\item $I(Q_{\mu_j})$ has $1$ in the $j$--th entry and $0$ everywhere else, \newline $i.e.$ maps to the $j$--th standard basis vector.
	\end{itemize}	
	This implies $ \Span \left( \{Q_{\lambda}\} \cup \{Q_{\mu}\} \right)   \cong \ima I = \RR^{|\l|+|\m|},$ so
	$\{Q_{\lambda}\}_{\lambda \in \l}\cup \{Q_{\mu}\}_{\mu \in \m}$ is linearly independent, and 
	$\Span \left( \{Q_{\lambda}\} \cup \{Q_{\mu}\} \right) \cap \Span \{Q_e\}  = \{ 0 \}.$ 
	This implies $(i)$, and together with Lemma~\ref{dimension_Qe}, it implies $(ii).$
	By \cite[Corollary 2.3]{Lu}, we have
$$
\dim \TAS = |V| - |E| + 2|T| = |T| + \sum_i g_i.
$$
	It now follows from Lemma~\ref{dimension_Qe} that
	$\dim \Span \left(\{Q_e\}\cup \{Q_{\lambda}\} \cup \{Q_{\mu}\} \right) = \dim \TAS,$
	 which implies $(iii).$
 	We also note that this shows $\Span\{Q_e\} = \Ker I.$
\end{proof}

\begin{thm} \label{Q_in_STAS}
The set $ \{Q_e\} \cup \{Q_{\lambda}\}$ is a subset of $\STAS$, whereas $\Span ( \{Q_{\mu} \} ) \cap \STAS = \{0\}$. Therefore $\Span \left( \{Q_e\} \cup \{Q_{\lambda}\} \right) = \STAS$.
\end{thm}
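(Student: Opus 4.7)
The plan is to verify the two containment claims directly from Lemma \ref{FuGu_lem} and the intersection data of the canonical generating set, and then to extract the equality $\Span(\{Q_e\} \cup \{Q_\lambda\}) = \STAS$ by combining these with Lemma \ref{FuGu_prop}(iii).

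First I would verify that $Q_e \in \STAS$ for every $e \in E$. Recall from the discussion preceding Lemma \ref{FuGu_prop} that one may write $Q_e = Q_\rho$ for a suitably oriented normal closed curve $\rho$ encircling one endpoint of $e$ on the appropriate vertex link. Since such a small loop around a single vertex bounds a disc on the closed surface $\Lk(v)$, it has vanishing algebraic intersection with every $\lambda \in \l$. Applying Lemma \ref{FuGu_lem} with $\alpha = \lambda$ and $\beta = \rho$ gives
\[
\sum_{q \in \Box} \ind(q,\lambda)\, Q_e(q) \;=\; 2\,\iota(\lambda,\rho) \;=\; 0,
\]
so $Q_e \in \STAS$. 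For $\lambda \in \l$, Lemma \ref{FuGu_lem} with $\alpha = \lambda' \in \l$ and $\beta = \lambda$ gives $\sum_q \ind(q,\lambda')\, Q_\lambda(q) = 2\,\iota(\lambda',\lambda) = 0$, as $\iota$ vanishes on all pairs of longitudes in the canonical generating set (and trivially on any embedded normal curve with itself). Hence $Q_\lambda \in \STAS$ as well.

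Next I would show $\Span\{Q_\mu\} \cap \STAS = \{0\}$. Writing an element as $X = \sum_{i,j} b_{i,j}\, Q_{\mu^i_j}$, the membership $X \in \STAS$ combined with Lemma \ref{FuGu_lem} yields, for each $\lambda^l_k \in \l$,
\[
0 \;=\; \sum_{q \in \Box} \ind(q,\lambda^l_k)\, X(q) \;=\; \sum_{i,j} b_{i,j}\cdot 2\,\iota(\lambda^l_k,\mu^i_j) \;=\; 2\, b_{l,k},
\]
since $\iota(\lambda^l_k,\mu^i_j) = \delta_{li}\delta_{kj}$ by the construction of the canonical generators. Hence every $b_{i,j}$ vanishes, giving $X = 0$.

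Finally I would combine these with Lemma \ref{FuGu_prop}(iii): any $w \in \STAS \subseteq \TAS$ admits a decomposition $w = w_1 + w_2$ with $w_1 \in \Span(\{Q_e\} \cup \{Q_\lambda\})$ and $w_2 \in \Span\{Q_\mu\}$. The first two steps give $w_1 \in \STAS$, whence $w_2 = w - w_1 \in \STAS \cap \Span\{Q_\mu\} = \{0\}$, and therefore $w = w_1 \in \Span(\{Q_e\} \cup \{Q_\lambda\})$. The reverse inclusion is immediate from the first step. I do not anticipate a substantial obstacle; the only geometric point that needs care is the observation that a small normal loop $\rho$ around an endpoint of $e$ is nullhomologous on the closed surface $\Lk(v)$, which is precisely what decouples the $Q_e$ from the longitude constraints cutting out $\STAS$.
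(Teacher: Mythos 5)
Your proof is correct, but it takes a slightly different route from the paper in two of the three steps. For the containment $Q_e \in \STAS$, the paper runs a purely algebraic computation: it observes the antisymmetry $(q')^*(\overline{q}) - (q'')^*(\overline{q}) = -\bigl((\overline{q}')^*(q) - (\overline{q}'')^*(q)\bigr)$, swaps the order of summation, and reduces $\sum_{\overline{q}}\ind(\overline{q},\lambda')\,Q_e(\overline{q})$ to $-\sum_q i(q,e)\,Q_{\lambda'}(q)$, which vanishes because $Q_{\lambda'}\in\TAS$. You instead invoke the identification $Q_e = Q_\rho$ for a small normal loop $\rho$ around an endpoint of $e$ and apply Lemma~\ref{FuGu_lem}, using that $\rho$ bounds a disc to get $\iota(\lambda,\rho)=0$. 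Both are valid; your version unifies the treatment of $Q_e$ and $Q_\lambda$ under the same intersection-number lemma, while the paper's algebraic version sidesteps any need to verify that $\rho$ and the longitudes satisfy the normal-and-transverse hypotheses of Lemma~\ref{FuGu_lem} (a point you pass over silently, though it is the same implicit reliance already present in the proof of Lemma~\ref{FuGu_prop}). For the final equality, the paper argues by dimension: $\Span(\{Q_e\}\cup\{Q_\lambda\})\subseteq\STAS$ gives $\dim\STAS\ge|T|$, while $\STAS\cap\Span\{Q_\mu\}=\{0\}$ together with $\dim\TAS=|T|+\sum g_i$ and Lemma~\ref{FuGu_prop}$(i)$ gives $\dim\STAS\le|T|$. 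You instead take $w\in\STAS$, split $w=w_1+w_2$ via Lemma~\ref{FuGu_prop}$(iii)$ with $w_1\in\Span(\{Q_e\}\cup\{Q_\lambda\})$ and $w_2\in\Span\{Q_\mu\}$, and observe $w_2\in\STAS\cap\Span\{Q_\mu\}=\{0\}$. This is a cleaner deduction and avoids recomputing dimensions, at the cost of leaning on the spanning statement $(iii)$ rather than on the more primitive facts $(i)$ and $(ii)$. In short: correct, and a legitimate re-packaging of the same toolbox, just with the geometric lemma carrying more of the load.
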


\begin{proof}
 Since $ \{Q_e\} \cup \{Q_{\lambda}\}$ is a subset of $\TAS$, the first part follows from checking that for all $\lambda,\lambda' \in \l$ and $e\in E$
\begin{align}
 &\sum_{\overline{q} \in \Box }\ind(\overline{q},\lambda') Q_e (\overline{q}) = 0,\\
 &\sum_{\overline{q} \in \Box}\ind(\overline{q},\lambda') Q_{\lambda}(\overline{q}) = 0. \label{STAS_property}
\end{align}
Observe that for all $q,\overline{q} \in \Box$,
$$
(q')^*(\overline{q}) - (q'')^*(\overline{q})  = - \left( \ (\overline{q}')^*(q) - (\overline{q}'')^*(q) \ \right),
$$
hence
\begin{align*}
\sum_{\overline{q} \in \Box}\ind(\overline{q},\lambda') Q_e (\overline{q}) & = \sum_{\overline{q} \in \Box}\ind(\overline{q},\lambda') \sum_{q \in \Box} i(q,e) \left( (q')^*(\overline{q}) - (q'')^*(\overline{q}) \right)\\
& = \sum_{q \in \Box} i(q,e) \sum_{\overline{q} \in \Box}\ind(\overline{q},\lambda')  \left( (q')^*(\overline{q}) - (q'')^*(\overline{q}) \right)\\
& = - \sum_{q \in \Box} i(q,e) \sum_{\overline{q} \in \Box}\ind(\overline{q},\lambda')  \left( \ (\overline{q}')^*(q) - (\overline{q}'')^*(q) \ \right)\\
& = - \sum_{q \in \Box} i(q,e) Q_{\lambda'} (q) = 0,
\end{align*}
as $Q_{\lambda'} \in \TAS$. This shows $Q_e \in \STAS$.

Now suppose $\lambda,\lambda' \in \l$, then $\iota(\lambda', \lambda) = 0$ as they are disjoint by assumption. By Lemma \ref{FuGu_lem}, this shows that $\{Q_{\lambda}\}$, and in particular $ \{Q_e\} \cup \{Q_{\lambda}\}$, is a subset of $\STAS$, and by Lemma~\ref{FuGu_prop}$(ii)$,
$$
\dim \STAS \geq |T|.
$$

For the next part, let $\sum_{i,j} a^i_j Q_{\mu^i_j}$ be an element in $\Span ( \{Q_{\mu} \} )$ and suppose by contradiction $\sum_{i,j} a^i_j Q_{\mu^i_j} \in \STAS$. Then for all $l$ and for all $\lambda^l_k \in \l_l$, by Lemma \ref{FuGu_lem},
$$
0 = \sum_{q \in \Box}\ind(q,\lambda^l_k) \left( \sum_{i,j} a^i_j Q_{\mu^i_j}(q) \right) = \sum_{i,j} a^i_j \ \iota(\lambda^l_k,\mu^i_j).
$$
But $\iota(\lambda^l_k,\mu^i_j) \not= 0$ if and only if $i = l$ and $k=j$, therefore $a^l_k = 0$ for all $l,k$ and 
$$
\Span ( \{Q_{\mu} \} ) \cap \STAS = \{0\}.
$$ 
It follows from Lemma~\ref{FuGu_prop}$(i)$ that $\dim \STAS \leq |T|$. Hence $\dim \STAS = |T|$ and so 
\begin{equation*}
\Span \left( \{Q_e\} \cup \{Q_{\lambda}\} \right) = \STAS.\qedhere
\end{equation*}
\end{proof}


\subsection{A Parametrization of $G^{-1}(u)$}
\label{sec:parametrisation}

Using the identification $Z = \H^T,$ we write $H_{\l}: \H^T \rightarrow \C^{\l},$ giving the map
\begin{align*}
 (G,H_{\l}): \H^T &\to \quad \C^E \times \C^{\l}\\
 z \quad &\mapsto \quad \left(G(z),H_{\l}(z) \right).
\end{align*}

We already remarked that when $G(z) \not= (2\pi i,\dots,2\pi i) $, the \emph{value} of the boundary map $H_{\l}$ depends on the choice of normal curves representing homology classes of peripheral curves. The following lemma shows that the \emph{injectivity} of the differential of $(G,H_{\l})$ is independent of this choice.

\begin{lem}
For each vertex $v_i \in V$, let $g_i$ be the genus of $\Lk(v_i)$. Let $A_i = \{ \alpha^i_1,\dots,\alpha^i_{g_i} \}$ and $B_i = \{ \beta^i_1,\dots,\beta^i_{g_i} \}$ be two sets of longitudes on $\Lk(v_i)$, such that $\alpha^i_j$ and $\beta^i_j$ are representatives of the same element in $H_1(\Lk(v_i))$, and set $A = \bigcup_i A_i$ and $B = \bigcup_i B_i$. Then
$$
 \rank \diff(G,H_{A}) = \rank \diff(G,H_{B}).
$$
In particular $\diff(G,H_{A})$ is injective if and only if $\diff(G,H_{B})$ is injective.
\end{lem}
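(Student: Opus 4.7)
The plan is to exhibit a linear automorphism of the target $\C^E\times\C^\l$ that intertwines $(G,H_{A})$ and $(G,H_{B})$, and then conclude the equality of ranks from the chain rule. The analytic heart of the matter is the existence of a $\C$-linear map $L\co \C^E \to \C^\l$ with integer entries, independent of $z$, such that
$$H_{A}(z) - H_{B}(z) \;=\; L\bigl(G(z)\bigr) \qquad \text{for all } z\in \H^T.$$
Granted this, the map $T\co \C^E\times\C^\l \to \C^E\times\C^\l$ defined by $T(u,v)=(u,v+L(u))$ is a linear isomorphism and $(G,H_{A})=T\circ (G,H_{B})$. Differentiating yields $\diff(G,H_{A})=T\circ \diff(G,H_{B})$ at every $z$, so the two differentials have equal rank and, in particular, one is injective if and only if the other is.

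To produce the factorisation, I would fix indices $i,j$ and study the pair $\alpha:=\alpha^i_j$ and $\beta:=\beta^i_j$, two oriented normal longitudes on $\Lk(v_i)$ with $[\alpha]=[\beta]$ in $H_1(\Lk(v_i);\Z)$. Because $\h_\alpha(z)$, $\h_\beta(z)$ and each coordinate $G(z)(e)$ are $\C$-linear in the variables $\log z(q)$, with integer coefficients $\eta(q,\cdot)$ and $i(q,e)$, the sought identity is equivalent to the combinatorial statement
$$\eta(q,\alpha)-\eta(q,\beta)\;=\;\sum_{e\in E} n^{i,j}_e\, i(q,e) \qquad \text{for every } q\in\Box,$$
for integers $n^{i,j}_e$ independent of $q$. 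To produce these integers I would connect $\alpha$ and $\beta$ by a finite sequence of elementary moves on oriented normal curves in $\Lk(v_i)$: normal isotopies, insertion or deletion of null pairs $\gamma+\gamma^{-1}$, and vertex pushes in which a segment is slid across a single vertex $w$ of $\tri_{v_i}^{(0)}$. Normal isotopies and null pairs leave each $\eta(q,\cdot)$ unchanged; a vertex push across $w$ amounts to inserting or deleting a small peripheral normal loop $\rho_w$ encircling $w$. The remark immediately preceding the definition of $\h_\alpha$ supplies $\eta(q,\rho_w)=\pm i(q,e_w)$ for all $q$, where $e_w\in E$ is the edge of $\tri$ through $v_i$ at $w$. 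Summing the signed contributions along the sequence yields the integers $n^{i,j}_e$, and assembling them over all $(i,j)$ produces the global map $L$.

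The main obstacle is the combinatorial lemma that any homology between two oriented normal representatives $\alpha$ and $\beta$ of the same class in $H_1(\Lk(v_i);\Z)$ can indeed be realised by such a finite sequence of elementary moves. A clean formulation I would pursue is that the abelian group of oriented normal curves on $\Lk(v_i)$ modulo normal isotopy and null pairs is naturally isomorphic to $H_1\bigl(\Lk(v_i)\setminus \tri_{v_i}^{(0)};\Z\bigr)$, and that the kernel of its natural surjection onto $H_1(\Lk(v_i);\Z)$ is generated by the peripheral loops $\rho_w$; the difference $\alpha-\beta$ then lies in that kernel, and the identity $\h_{\rho_w}(z)=\pm G(z)(e_w)$ delivers $L$. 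An elementary alternative is to bound a simplicial $2$-chain $C$ on $\Lk(v_i)$ with $\partial C=\alpha-\beta$ and perform a face-by-face sweep across $C$, checking that each elementary step contributes either a normal isotopy, a null-pair modification, or a single vertex push. Once this combinatorial reduction is in place the rest of the argument is purely formal.
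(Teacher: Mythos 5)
Your proposal is correct and takes essentially the same route as the paper: both proofs rest on the identity $H_A(z)-H_B(z)=L(G(z))$ for a fixed linear map $L$ (the paper writes it row-by-row as $\h_{\alpha^i_j}=\h_{\beta^i_j}+\sum_e a_e\,G(\cdot)(e)$), and then reduce the rank comparison to elementary row operations. The paper in fact asserts this identity in a single line, citing only that the curves represent the same class, whereas you attempt to justify it via elementary moves (normal isotopies, null pairs, vertex pushes), so you are filling in a step the paper leaves to the reader.

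One caution on your first proposed justification: the abelian group of oriented normal curves modulo normal isotopy and null pairs is \emph{not} naturally isomorphic to $H_1(\Lk(v_i)\setminus\tri_{v_i}^{(0)};\Z)$ — for instance, already on a once-punctured torus there are infinitely many normal isotopy classes of essential simple closed normal curves, so the free abelian group on them is infinite rank. To make that reformulation correct you would also need to impose the geometric (Haken) sum relation, or equivalently work with the monoid of normal classes rather than formal sums of curves. Your second route — bound a simplicial $2$-chain with $\partial C=\alpha-\beta$ and sweep across it triangle by triangle, tracking when a sweep crosses a vertex of $\tri_{v_i}$ — is the cleaner and more robust way to produce the integers $n^{i,j}_e$, and it confirms the paper's claim with $a_e\in\Z$ rather than merely $a_e\in\RR$.
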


\begin{proof}
 As $\alpha^i_j$ and $\beta^i_j$ are in the same homotopy class,
$$
\h_{\alpha^i_j}(z) = \h_{\beta^i_j}(z) + \sum_{e \in E} a_e G(z)(e) \qquad \mbox{ for some } a_e \in \RR.$$
Hence
$$
\nabla \h_{\alpha^i_j}(z) = \nabla  \h_{\beta^i_j}(z) + \sum_{e \in E} a_e \nabla G(z)(e),
$$
$i.e.$ each row of $\diff(H_A)$ is a linear combinations of the corresponding row of $\diff(H_B)$ and rows of $\diff(G)$. It follows that $\diff(G,H_{A})$ is related to $\diff(G,H_{B})$ by elementary row operations, hence they have the same rank.
\end{proof}

\begin{thm} \label{injective_thm}
The derivative $\diff(G,H_{\l}): T_z(\H^T) \rightarrow \C^E \times \C^{\l}$ is injective for any $z \in \H^T$.
\end{thm}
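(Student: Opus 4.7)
The plan is to interpret injectivity of $\diff(G, H_\l)$ as a transversality statement and reduce it to the structural description of $\STAS$ given by Theorem~\ref{Q_in_STAS}. Given $X \in \Ker \diff(G, H_\l) \subset T_z \H^T$, I form the infinitesimal logarithmic shape vector $\omega_X \in \C^\Box$ by setting $\omega_X(q_\sigma) = X(\sigma)/z(q_\sigma)$ on the preferred quadrilateral $q_\sigma < \sigma$, with $\omega_X(q_\sigma') = X(\sigma)/(1-z(q_\sigma))$ and $\omega_X(q_\sigma'') = -X(\sigma)/[z(q_\sigma)(1-z(q_\sigma))]$ obtained by differentiating the parameter relations. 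These formulas automatically yield $\omega_X(q)+\omega_X(q')+\omega_X(q'') = 0$ in each tetrahedron, and the two hypotheses translate to $\sum_q i(q,e)\omega_X(q) = 0$ for every $e \in E$ and $\sum_q \eta(q,\lambda)\omega_X(q) = 0$ for every $\lambda \in \l$. Separating real and imaginary parts, both $\re \omega_X$ and $\im \omega_X$ lie in $\STAS$.

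Using Theorem~\ref{Q_in_STAS}, I then expand
\[
\re \omega_X = \sum_{e} c_e Q_e + \sum_{\lambda} d_\lambda Q_\lambda, \qquad \im \omega_X = \sum_{e} c'_e Q_e + \sum_{\lambda} d'_\lambda Q_\lambda
\]
with real coefficients. Pairing $\omega_X$ against the meridian functional $\sum_q \eta(q,\mu)(\cdot)$ for $\mu \in \m$, Lemma~\ref{FuGu_lem} together with the observation that $Q_e = Q_{\rho}$ for a contractible loop $\rho$ in the vertex link (so $\iota(\mu, \rho) = 0$) and the symplectic dual-basis identity $\iota(\mu^i_j, \lambda^{i'}_{j'}) = -\delta_{ii'}\delta_{jj'}$ yields
\[
\diff H_{\mu^i_j}(X) = \sum_q \eta(q,\mu^i_j)\,\omega_X(q) = -2\bigl(d_{\lambda^i_j} + i\,d'_{\lambda^i_j}\bigr).
\]

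The decisive step, and the main obstacle, is to upgrade this identity to the conclusion $X = 0$. Theorem~\ref{thm_dimension} gives $\dim_\C \Ker \diff G = \sum_i g_i = \dim_\C \C^\l$, so $\diff H_\l|_{\Ker \diff G}$ is a complex-linear map between spaces of equal complex dimension and injectivity is equivalent to its being an isomorphism. The key ingredient is that the $|T|$-dimensional complex subspace $W_z \subset \C^\Box$ parametrising $X \mapsto \omega_X$ must meet the complexification $\STAS_\C$ (also of complex dimension $|T|$) transversely inside the $2|T|$-dimensional subspace $\{\omega : \omega(q)+\omega(q')+\omega(q'')=0\}$, for every $z \in \H^T$. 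This transversality combines the non-real multiplicative ratios $\omega_X(q_\sigma')/\omega_X(q_\sigma) = z(q_\sigma)/(1-z(q_\sigma))$ (which have nonzero imaginary part because $z(q_\sigma) \in \H$) with the symplectic duality built into Lemma~\ref{FuGu_lem}: the meridian identity above, together with the dimension match, is expected to force the image of $\diff(H_\l, H_\m)|_{\Ker \diff G}$ in $\C^\l \oplus \C^\m$ to be Lagrangian and transverse to $\C^\m$, whence $d_{\lambda^i_j} = d'_{\lambda^i_j} = 0$; an analogous argument exploiting the genuine complexity of the ratios $z(q_\sigma)/(1-z(q_\sigma))$ then eliminates the $c_e, c'_e$, yielding $\omega_X = 0$ and hence $X = 0$.
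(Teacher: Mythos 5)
Your setup is sound: forming the logarithmic shape vector $\omega_X$, checking that the parameter relations force $\omega_X(q)+\omega_X(q')+\omega_X(q'')=0$, and observing that the vanishing of $\diff G(X)$ and $\diff H_{\l}(X)$ (being real-integer linear combinations of the $\omega_X(q)$) places both $\re\,\omega_X$ and $\im\,\omega_X$ in $\STAS$ are all correct, and your pairing identity against meridians follows from Lemma~\ref{FuGu_lem}. Up to that point you are tracking the same objects as the paper in different coordinates.

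The problem is the decisive last step. You acknowledge it yourself with the phrases ``is expected to force'' and ``an analogous argument \ldots then eliminates the $c_e,c_e'$.'' These do not constitute a proof, and in fact the symplectic structure alone cannot close the gap. The image of $\diff(H_{\l},H_{\m})$ on $\Ker\diff G$ being Lagrangian in $\C^{\l}\oplus\C^{\m}$ (a Neumann--Zagier type fact) tells you nothing about transversality to $\C^{\m}$: $\C^{\m}$ itself is Lagrangian, and two Lagrangians in a symplectic vector space may coincide. Likewise, knowing that the coefficient vectors $(\omega_X(q),\omega_X(q'),\omega_X(q''))$ have genuinely complex ratios is a pointwise observation about a single tetrahedron and does not by itself rule out that a nontrivial element of $W_z$ lands in $\STAS_{\C}$. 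What you are missing is a positivity ingredient. The paper supplies it by switching to dihedral angle coordinates $x\in X$ and invoking the Lobachevsky volume function $F$: the ``real part'' constraints read as $Q_e\cdot\Hess_x(F)\,w=0$ and $Q_{\lambda}\cdot\Hess_x(F)\,w=0$, and since Theorem~\ref{Q_in_STAS} shows $\{Q_e\}\cup\{Q_{\lambda}\}$ spans $\STAS\ni w$, one gets $w\cdot\Hess_x(F)\,w=0$ and concludes from strict negative definiteness of $\Hess_x(F)$ (here $z(q)\in\H$, i.e.\ the angles are genuinely in $(0,\pi)$, is exactly what is needed). That definiteness is the precise substitute for the transversality you are trying to extract from symplectic duality, and without it or something equivalent your argument does not reach $\omega_X=0$.
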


\begin{proof}
Let $z \in Z$. For every $q \in \Box$, $z(q) \in \H$ uniquely determines three angles $\alpha, \beta$ and $\gamma$ of the triangle with vertices $(0,1,z(q))$ such that $z(q) = \frac{\sin(\beta)}{\sin(\gamma)} e^{i \alpha}$ and
$$
z(q') = \frac{\sin(\gamma)}{\sin(\alpha)} e^{i \beta} \qquad z(q'') = \frac{\sin(\alpha)}{\sin(\beta)} e^{i \gamma}.
$$
Hence we can identify $Z$ with the set $X = \{ x \in \RR_{>0}^{\Box} | \sum_{q<\sigma} x(q) = \pi \}$ via the map $\phi^T: X \rightarrow Z$, where
$$
\phi^T(x)(q) =  \frac{\sin(x(q'))}{\sin(x(q''))} e^{i x(q)}.
$$
Under this identification, we can write $(G,H_{\l})$ as a map $(G,H_{\l}) : X \rightarrow \C^E\times \C^{\l} = (\RR \times \RR)^{E\cup{\l}}$ where, for each edge $e\in E$ and curve $\lambda \in {\l}$,
\begin{align*}
G(x)(e) &= \sum_{q:q\sim e} \log \left(\frac{\sin(x(q'))}{\sin(x(q''))} e^{i x(q)} \right)\\
&= \left(\sum_{q:q\sim e} \log(\sin(x(q'))) - \log(\sin(x(q'')), \sum_{q \in \Box} i(q,e) x(q) \right).\\
H_{\l}(x)(\lambda) &= \sum_{q \in \Box}\ind(q,\lambda) \log \left(\frac{\sin(x(q'))}{\sin(x(q''))} e^{i x(q)} \right)\\
&= \left(\sum_{q \in \Box}\ind(q,\lambda) \left( \log(\sin(x(q'))) - \log(\sin(x(q'')) \right), \sum_{q \in \Box}\ind(q,\lambda) x(q) \right).
\end{align*}
Now our goal is to show that $\diff(G,H_{\l}): T_x X \rightarrow (\RR \times \RR)^{E\cup{\l}}$ is injective for all $x \in X$.
Let $w \in T_x X = \{ w\in \RR^{\Box} | \sum_{q < \sigma} w(q) = 0 \ \forall \sigma \in T\}$ be a tangent vector such that $\diff(G,H_{\l})(x)(w) = 0$.

We recall the elementary fact that $\frac{\diff}{\diff t} \log( \sin t) = \cot t$. For every edge $e$,
\begin{equation} \label{cot_condition_G}
\sum_{q:q\sim e} \cot(x(q'))w(q')  - \cot(x(q''))w(q'') = 0,
\end{equation}
\begin{equation} \label{tas_property_G}
\sum_{q \in \Box} i(q,e) w(q) = 0.
\end{equation}
and for every curve $\lambda$,
\begin{equation} \label{cot_condition_H}
\sum_{q \in \Box}\ind(q,\lambda) \left( \cot(x(q'))w(q')  - \cot(x(q''))w(q'') \right) = 0,
\end{equation}
\begin{equation} \label{tas_property_H}
\sum_{q \in \Box}\ind(q,\lambda) w(q) = 0.
\end{equation}
Observe that (\ref{tas_property_G}) and (\ref{tas_property_H}) imply $w \in \STAS \le \TAS \subset T_x X.$

Let $F : \RR^{\Box} \rightarrow \RR$ be the volume function
$$
F(x) = \sum_{q \in \Box} \Lambda(x(q)),
$$
where $\Lambda(x) = - \int_0^x \log |2\sin(u)|du$ is the Lobachevsky function.
The Hessian of $F$ is the diagonal matrix with diagonal entries $-\cot(x(q))$, and it is negative definite at each point $x\in X$ (see, for instance, \cite[pg. 17]{FuGu}). Recall that for $e \in E$ and $\lambda \in \l$, we have
,\begin{align*}
&Q_e = \sum_{q:q\sim e} (q')^* - (q'')^*,\\
&Q_{\lambda} = \sum_{q \in \Box}\ind(q,\lambda) \left((q')^* - (q'')^* \right).
\end{align*}
Then by (\ref{cot_condition_G}) and (\ref{cot_condition_H})
\begin{align*}
& Q_e \cdot \Hess_x(F) w = 0 \qquad \forall e \in E,\\
& Q_{\lambda} \cdot \Hess_x(F) w = 0 \qquad \forall \lambda \in \l.
\end{align*}
Theorem~\ref{Q_in_STAS} shows that $\STAS$ is spanned by $\{Q_e\}_{e \in E} \cup \{Q_{\lambda}\}_{\lambda \in \l}.$
Hence $w$ can be written as a linear combination of the $Q_e$ and $Q_\lambda$, and so
$$
w \cdot \Hess_x(F)w = 0.
$$
But since $\Hess_x(F)$ is negative definite, we have $w = 0$ and hence $\diff(G,H_{\l})$ is injective.
\end{proof}

Theorem \ref{thm_dimension} shows that the rank of $\diff G$ is constant $|T| - \sum_{v\in V} g_v$, hence Theorem \ref{injective_thm} implies the following result.

\begin{cor}\label{cor:parametrication}
 For all $u \in \C^E$ such that $G^{-1}(u) \not= \emptyset$, the restriction map
$$
\restr{H_{\l}}{G^{-1}(u)}: G^{-1}(u) \longrightarrow \C^{\l}
$$
is a local diffeomorphism onto its image.
\end{cor}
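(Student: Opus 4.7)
The plan is to combine the dimension count from Corollary~\ref{for:level set is smooth} with the injectivity of $\diff(G,H_{\l})$ from Theorem~\ref{injective_thm}, and then invoke the inverse function theorem.

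First, fix $u \in \C^E$ with $G^{-1}(u) \neq \emptyset$. By Corollary~\ref{for:level set is smooth}, $G^{-1}(u)$ is a smooth complex manifold of complex dimension $\sum_{v \in V} g_v$. Since $\l = \bigcup_i \l_i$ and $|\l_i| = g_i$, we have $|\l| = \sum_i g_i = \sum_{v \in V} g_v$, so the target space $\C^{\l}$ has the same complex dimension as $G^{-1}(u)$.

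Next, I would analyse the differential of the restriction at a point $z \in G^{-1}(u)$. The tangent space to the level set is
\[
T_z\bigl(G^{-1}(u)\bigr) = \Ker \diff G_z \subset T_z(\H^T).
\]
By Theorem~\ref{thm_dimension}, $\diff G$ has constant complex rank $|T| - \sum_{v \in V} g_v$, so $\dim_{\C} \Ker \diff G_z = \sum_{v \in V} g_v = |\l|$, confirming the dimension of $G^{-1}(u)$. Restricting $\diff(G,H_{\l})_z$ to this kernel, the first component vanishes identically, so the restriction coincides with $\diff\bigl(\restr{H_{\l}}{G^{-1}(u)}\bigr)_z$.

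Now I would invoke Theorem~\ref{injective_thm}, which gives that $\diff(G,H_{\l})_z$ is injective on all of $T_z(\H^T)$, and in particular on the subspace $\Ker \diff G_z$. Thus $\diff\bigl(\restr{H_{\l}}{G^{-1}(u)}\bigr)_z$ is an injective $\C$--linear map between complex vector spaces of the same dimension $|\l|$, hence a $\C$--linear isomorphism. Since this holds at every $z \in G^{-1}(u)$, the inverse function theorem (in its holomorphic form) yields that $\restr{H_{\l}}{G^{-1}(u)}$ is a local biholomorphism, and in particular a local diffeomorphism, onto its image. No step here is a real obstacle—the work has already been done in Theorem~\ref{thm_dimension} and Theorem~\ref{injective_thm}; the only subtlety to record carefully is that the equality of dimensions between $G^{-1}(u)$ and $\C^{\l}$ is exactly what allows injectivity of the differential to upgrade to a local diffeomorphism.
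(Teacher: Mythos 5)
Your proof is correct and follows essentially the same route as the paper, which simply cites Theorem~\ref{thm_dimension} (constant rank of $\diff G$, hence $G^{-1}(u)$ is a manifold of the right dimension) and Theorem~\ref{injective_thm} (injectivity of $\diff(G,H_{\l})$) and leaves the inverse function theorem argument implicit. You have spelled out the intended steps — identifying $T_z(G^{-1}(u))$ with $\Ker \diff G_z$, observing injectivity of the restricted differential, and matching dimensions $|\l| = \sum_v g_v$ — accurately.
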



\section{Two Dehn-surgery components}

Our first example of a one-cusped hyperbolic 3--manifold of finite volume illustrates Choi's original result. It also has the property that the two discrete and faithful characters lie on different components of the $\PSL$--character variety. It was found via the following construction due to Nathan Dunfield. Let $N$ be a 2--cusped
manifold with strong geometric isolation (see \cite{NR} for a definition). Let $A$ be the collection of all
hyperbolic Dehn fillings on the first cusp in $N$; let $M_1$, $M_2$, $M_3$,... be
distinct manifolds in $A.$    Fix an orientation of $N,$ and let $\chi_0$ be the
character of an associated holonomy representation.  Let $\chi_n$ be the
character of the holonomy representation of $M_n$ that is in a small open neighbourhood of $\chi_0,$ and
let $X_n$ be the component of the character variety of $M_n$ which contains
$\chi_n$.   By strong geometric isolation, the image of $X_n$ in the character
variety of the second cusp is independent of $n$, call it $C.$   Now the
differential of the volume function on $X_n$ pulls back from a 1--form on $C.$
Hence, the quantity 
\begin{equation}
        \max\{ \vol(\chi) \mid \chi \in X_n\} - \min\{ \vol(\chi) \mid \chi \in X_n\} 
\end{equation}
is a constant $V$ independent of $n.$  By volume rigidity, $V \le 2 \min(
\vol(M_n) )$.   So if $M_n$ has non-minimal volume (and of course $\vol(M_n) \to
\vol(N)$ so there are many such), it follows that $\chi_n$ and its complex
conjugate must lie on different components of the character variety of $M_n.$

The following explicit example was found by looking in the census due to Callahan, Hildebrand and Weeks (as shipped with {\tt Regina }\rm \cite{Regina}) for an example given by Neumann and Reid~\cite{NR}.
Let $M$ be the manifold $t12046$ in this census, and let $N$ be the manifold obtained from $M$ by $(2,1)$-Dehn filling on the first cusp. We fix the following ideal triangulation $\tri$ on $N$:

\begin{table}[h]
\begin{center}
   	\begin{tabular}{ | l | l | l | l | l |} 
    \hline
    Tetrahedron & Face $012$ & Face $013$ & Face $023$ & Face $123$ \\ \hline
    0 & 4 (203) & 2 (321) & 6 (032) & 5 (120) \\ \hline
    1 & 3 (312) & 2 (012) & 4 (013) & 6 (031) \\ \hline
    2 & 1 (013) & 4 (213) & 5 (123) & 0 (310) \\ \hline
    3 & 6 (312) & 4 (012) & 5 (203) & 1 (120) \\ \hline
    4 & 3 (013) & 1 (023) & 0 (102) & 2 (103) \\ \hline
    5 & 0 (312) & 6 (012) & 3 (203) & 2 (023) \\ \hline
    6 & 5 (013) & 1 (132) & 0 (032) & 3 (120) \\ \hline
    \end{tabular}
\end{center}
\caption{The ideal triangulation $\tri$ on $N$.}
\end{table}

Let $z_i$ be a shape parameter of the tetrahedron $T_i$, with respect to the edge $(01)$, with orientation such that $z'_i$ is the parameter at $(02)$ and $z''_i$ is at $(03).$
A standard computation provides the two solutions
$$z^{(0)} = \left(i,\;\frac{1+i}{2},\;i,\frac{2+i}{2},\;\frac{3+i}{5},\;i\;,\frac{-1+i}{2}\right) \in \C^7$$
 and its complex conjugate $\overline{z^{(0)}}$ corresponding to discrete and faithful characters. Notice that $z^{(0)}$ is positively oriented.
 The one-dimensional components $C_0$ and $C_0'$ containing them have the following rational parametrizations:
 
 {\small
 $$
\varphi_0(u) =
\left(
\begin{array}{c}
\frac{-1-i + u}{-1 + (1 - i) u}\\ \\
\frac{1 + i}{(2 + 2 i) - 2 u}\\ \\
(1 + i) - \frac{i}{u}\\ \\
\frac{(1 - i) - 2 u + 2 u^2}{2 (-1 + u) u}\\ \\
\frac{(-1 + u) (-i + (1 + i) u)}{1 - (1 +i) u + (1 +i ) u^2}\\ \\
u\\ \\
\frac{(1 - i) + 2 i u}{(2 + 2 i) u - 2 u^2}\\ \\
\end{array}
\right) \qquad\text{and}\qquad
\varphi_0'(u) =
\left(
\begin{array}{c}
\frac{-1+i + u}{-1 + (1 + i) u}\\ \\
\frac{1 - i}{(2 - 2 i) - 2 u}\\ \\
(1 - i) + \frac{i}{u}\\ \\
\frac{(1 + i) - 2 u + 2 u^2}{2 (-1 + u) u}\\ \\
\frac{(-1 + u) (i + (1 - i) u)}{1 - (1 -i) u + (1 -i ) u^2}\\ \\
u\\ \\
\frac{(1 + i) - 2 i u}{(2 - 2 i) u - 2 u^2}\\ \\
\end{array}
\right)
$$
}

In particular, notice that $\varphi_0(u) = \overline{\varphi'_0(\overline{u})}.$

Henceforth, we will refer to the $j$-th components of  $\varphi_0(u)$ and $\varphi'_0(u)$ by $(\varphi_0)_j(u)$ and $(\varphi'_0)_j(u)$ respectively. The natural domain of $\varphi_0$ is $\C \setminus X$, where $X$ is the finite set of poles of $(\varphi_0)_j$. A direct computation shows that $X$ contains precisely $6$ elements, whose image via $\varphi_0$ are all ideal points of $C_0$, therefore
$$
\overline{\text{Im}(\varphi_0)} = \text{Im}(\varphi_0).
$$
Together with the image of $u= \infty$, they sum up to a total of $7$ ideal points, hence the image of $\varphi_0$ is a 7--punctured sphere. Moreover, a direct calculation reveals that the images of $\varphi_0$ and $\varphi'_0$ are disjoint.

\begin{figure}[ht]
\centering
\includegraphics[width=0.7\linewidth]{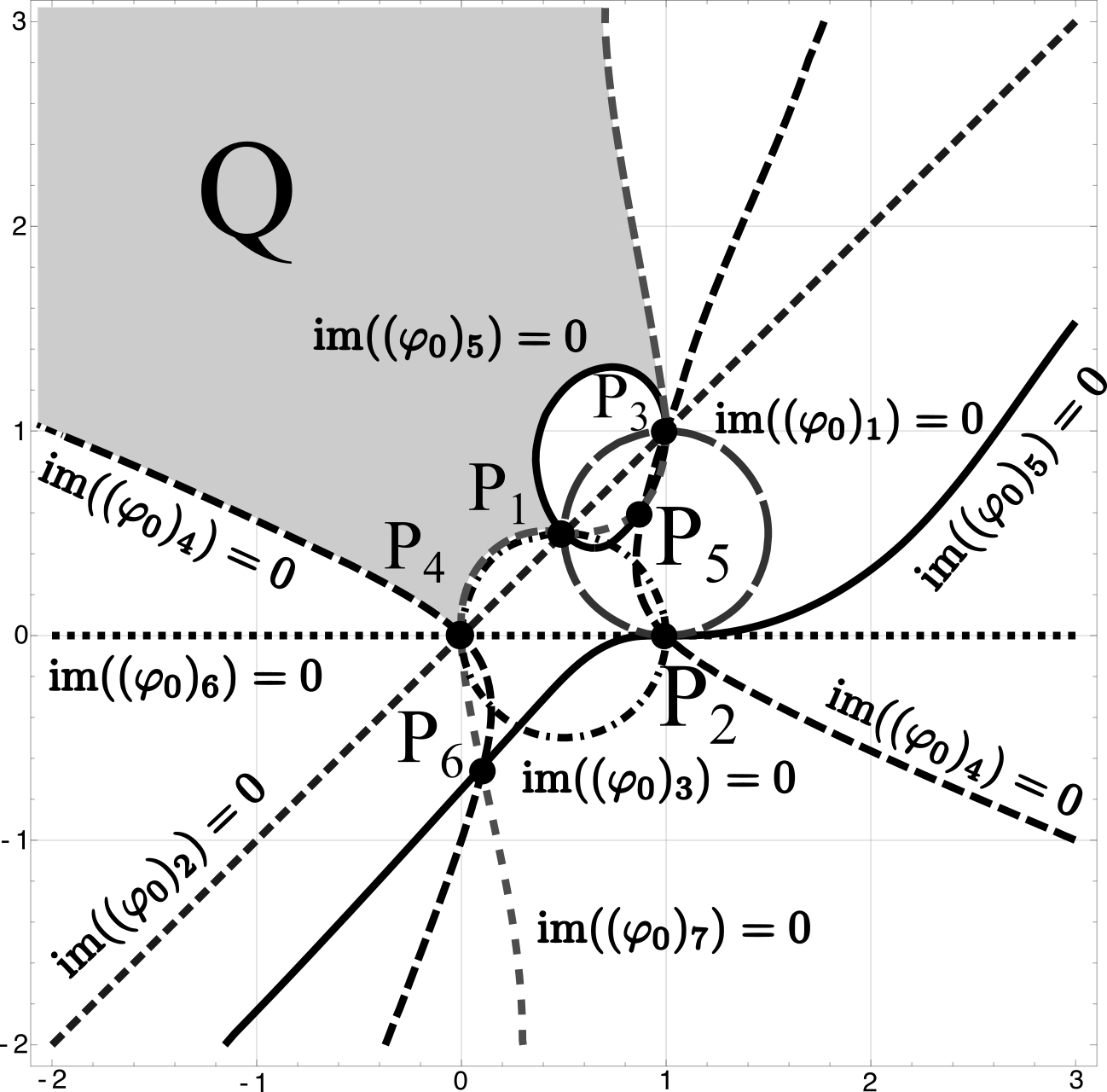}
\caption{Q is an ideal square.}
\label{square_ideal_region}
\end{figure}

Denoting by $\im(w)$ the imaginary part of a complex number $w \in \C$, we recall that the complete solution $\varphi_0(i) = z^{(0)}$ is geometric in the sense that
$$\im \big( (\varphi_0)_j(i) \big) = \im ( z^{(0)}_j ) > 0 \qquad \forall j \in \{1,\dots7\}.$$

The set of all positively oriented solutions is
$$
Q = \bigcap\limits_{j=1}^7 \{ u \in \C \ | \  \im \big( (\varphi_0)_j(u) \big) > 0 \}.
$$
 For $u = x + iy$,
$\{ \im \big( (\varphi_0)_j(u) \big) = 0 \}$ is a $1$-dimensional subvariety of $\RR^2,$ which can be explicitly computed from $\varphi_0$.

\begin{align*}
\im \big( (\varphi_0)_1 \big) & = (x-1)^2 + ( y- \frac{1}{2})^2 - \frac{1}{4},\\
\im \big( (\varphi_0)_2 \big) & = y-x,\\
\im \big( (\varphi_0)_3 \big) & = x^2 + y^2 - x, \\
\im \big( (\varphi_0)_4 \big) & = (y + \frac{1}{2})^2 - ( x \sqrt{3} - \frac{1}{2 \sqrt{3}})^2 - \frac{1}{6},\\
\im \big( (\varphi_0)_5 \big) & = -1+ 3x -3x^2 +x^3 - x^2y + y^2 + x y ^2 - y^3,  \\
\im \big( (\varphi_0)_6 \big) & = y, \\
\im \big( (\varphi_0)_7 \big) & = 2x - 3 x^2 - 2 x^3 + 2xy + y^2 - 2xy^2.  \\
\end{align*}

As shown in Figure \ref{square_ideal_region}, $Q$ is a simply connected open set of the plane and
$$
\partial Q  \subset V\left(\im \big( (\varphi_0)_4(u) \big),\im \big( (\varphi_0)_5(u) \big),\im \big( (\varphi_0)_7(u) \big) \right).
$$ 

We deduce that $\varphi_0(Q)$ is an ideal square contained in $\pi'(C_0).$ 


\section{A reducible complex-curvature level set}
		
	Let $M$ be the oriented pseudo--manifold given by the ideal triangulation $\tri$ shown in Figure~\ref{triangulation}. $M$ has two vertices $v_1$ and $v_2$, whose links are closed orientable surfaces of genus  $g_1 = 1$ and $g_2 = 2$ respectively. The induced triangulations of the links $\lk(v_1)$ and $\lk(v_2)$ are shown in the bottom right corner of Figure~\ref{triangulation} and in Figure~\ref{triangulation_cusp2}, respectively.

		\begin{figure}[ht]
			\centering
			\includegraphics[width=11cm]{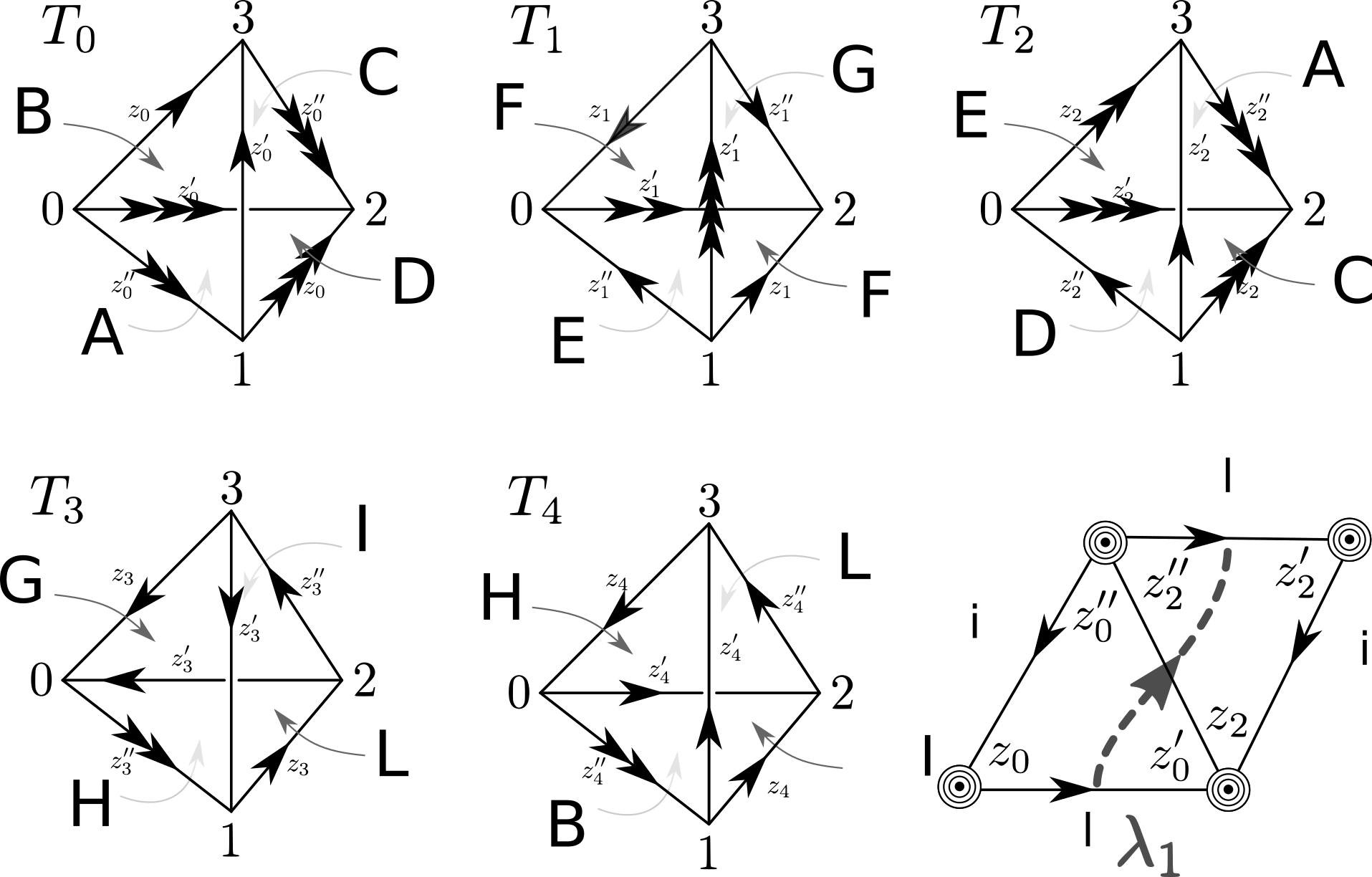}
			\caption{Shown are the five tetrahedra in the triangulation of $M$ and, in the bottom right corner, the induced triangulation of the vertex link of $v_1$ (viewed from the cusp).}
			\label{triangulation}
		\end{figure}
		
		\begin{figure}[ht]
			\centering
			\includegraphics[width=7cm]{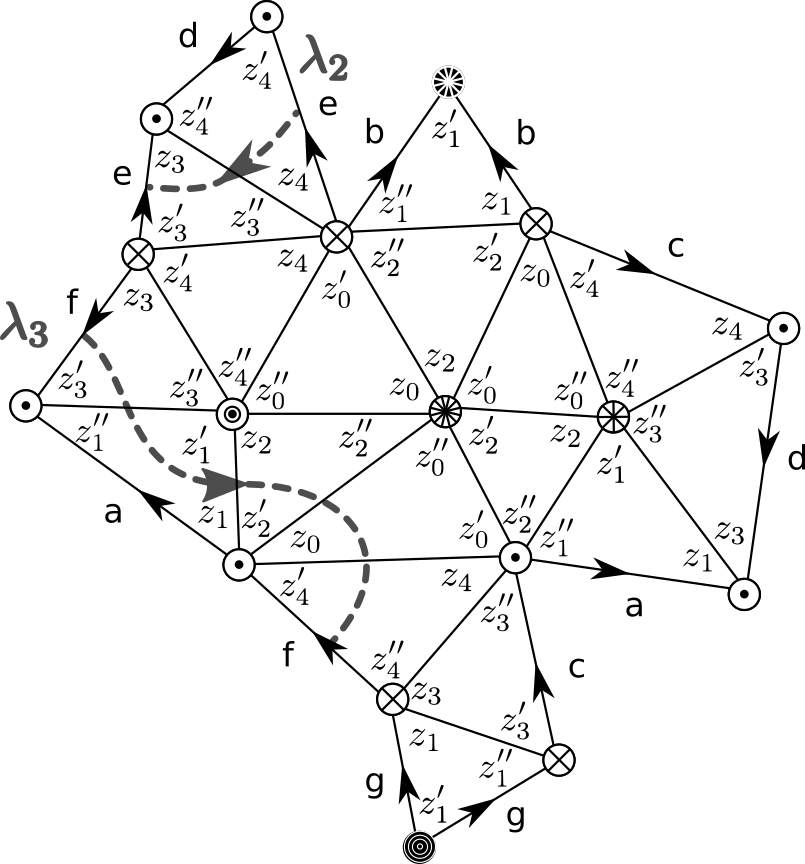}
			\caption{Triangulation induced on $v_2$ by $\tri$.}
			\label{triangulation_cusp2}
		\end{figure}
		
		\begin{table}[h]
			\begin{center}
				\begin{tabular}{ | l | l | l | l | l |} 
					\hline
					Tetrahedron & Face $012$ & Face $013$ & Face $023$ & Face $123$ \\ \hline
					0 & 2 (032) & 4 (012) & 2 (123) & 2 (120) \\ \hline
					1 & 2 (013) & 1 (213) & 3 (013) & 1 (103) \\ \hline
					2 & 0 (312) & 1 (012) & 0 (021) & 0 (023) \\ \hline
					3 & 4 (013) & 1 (023) & 4 (312) & 4 (230) \\ \hline
					4 & 0 (013) & 3 (012) & 3 (312) & 3 (230) \\ \hline
				\end{tabular}
			\end{center}
			\caption{The ideal triangulation $\tri$ on $N$.}
		\end{table}
	
We use the same notation as in the previous example, so $z_i$ is the shape parameter of tetrahedron $T_i$ with respect to edge $(12)$, and $z = (z_0,z_1,z_2,z_3,z_4)$. From the face pairings of $\tri$ we deduce the following complex-curvature and log-curvature maps:

	\begin{align}
	c(z) &= \left( 
	\begin{matrix}
	z_1'\\
	z_0'' z_1' z_2 z_3'' z_4''\\
	(z_0 z_0' z_0'') (z_2 z_2' z_2'')\\
	z_0 z_0' (z_1 z_1'')^2 z_2' z_2'' (z_3 z_3')^2 z_3'' (z_4 z_4')^2 z_4'' 
		\end{matrix} \right) = \left( 
	\begin{matrix}
	z_1'\\
	z_0'' z_1' z_2 z_3'' z_4''\\
	1\\
	z_0 z_0' (z_1 z_1'')^2 z_2' z_2'' z_3 z_3' z_4 z_4'
	\end{matrix} \right);\\
	G(z) &= \left(
	\begin{matrix}
		\log(z_1')\\
		\log(z_0'') + \log( z_1')+\log( z_2)+\log( z_3'')+\log( z_4'')\\
		\log(z_0 z_0' z_0'')+\log(z_2 z_2' z_2'')\\
		\log(z_0 z_0')+2\log( z_1 z_1'')+\log( z_2' z_2'')+\log( (z_3 z_3')^2 z_3'')+\log( (z_4 z_4')^2 z_4'')
	\end{matrix} \right) \label{G}
\end{align}
Notice that $\log(z-1) = \log(1-z) \pm \pi i$ where the sign ambiguity depends on the argument of $z$. However, applying the combinatorial Gauss--Bonnet theorem to the vertex, we deduce that
\begin{align*}
& G(z)(e_2)  = 2\pi i\\
& G(z)(e_0) + G(z)(e_1) + G(z)(e_3) = 8 \pi i
\end{align*}
Hence
{\scriptsize 
 $$
	G(z) = 
	\begin{pmatrix}
	-\log(1-z_1)\\ \\
	 \log(z_0-1)-\log(z_0) - \log(1-z_1)+\log( z_2)+\log( z_3-1)-\log(z_3) + \log( z_4-1)-\log(z_4)\\ \\
	2 \pi i\\ \\
	\log(z_0)-\log(z_0- 1) +2\log(1-z_1) - \log(z_2) +\log(z_3)-\log(z_3 - 1) + \log(z_4)-\log(z_4 - 1) + 8\pi i
	\end{pmatrix}.
$$
}
We observe that $\diff G$ has constant rank $|T| - g_1 - g_2 = 5-1-2 = 2$,
	$$
	\diff G(z) = \left(
	\begin{matrix}
	0 & \frac{-1}{z_1-1} & 0 & 0 & 0\\
	\frac{1}{z_0(z_0-1)} & \frac{-1}{z_1-1} & \frac{1}{z_2} & \frac{1}{z_3(z_3-1)} & \frac{1}{z_4(z_4-1)}\\
	0 & 0 & 0 & 0 & 0\\
	\frac{-1}{z_0(z_0-1)} & \frac{2}{z_1-1} & \frac{-1}{z_2} & \frac{-1}{z_3(z_3-1)} & \frac{-1}{z_4(z_4-1)}
	\end{matrix} \right).
	$$
	Moreover, for all $u = (u_0,u_1,u_2,u_3) \in \ima G$,  $G^{-1}(u)$ is the 3--dimensional variety generated by the ideal
	$$
	I = \big( z_1+e^{-u_0}-1, (z_0-1)z_2(z_3-1)(z_4-1) - e^{u_1}z_0(1-z_1)z_3z_4 \big).
	$$ 
	
For instance, let
$$
z^0 = \left( e^{\frac{\pi}{3}i},e^{\frac{\pi}{3}i},e^{\frac{\pi}{3}i},e^{\frac{\pi}{3}i},e^{\frac{\pi}{3}i} \right),
$$ 
then $u^0  = G(z^0) = (\frac{\pi}{3} i,\frac{5}{3}\pi i, 2 \pi i, 6\pi i) $ and
$$
G^{-1}(u^0) = \Big\{ (z_0,z_1,z_2,z_3,z_4) \in \H^5\ | \ z_1 = e^{\frac{\pi}{3} i}, z_2 = \frac{z_0z_3z_4 e^{\frac{4}{3}\pi i}}{(z_0-1)(z_3-1)(z_4-1)} \Big\}.
$$
	
For
$$
z^1 = \left(\frac{1}{1-e^{\frac{5}{6} \pi i}},e^{\frac{\pi}{3}i},\frac{1}{1-e^{\frac{5}{6} \pi i}},\frac{1}{1-e^{\frac{5}{6} \pi i}},\frac{1}{1-e^{\frac{5}{6} \pi i}}\right),
$$
$
u^1 = G(z^1) = \left(\frac{\pi}{3} i,\frac{11}{3}\pi i, 2 \pi i, 4\pi i\right)$, so
$$
G(z^0) \not= G(z^1) \ \mbox{ but } \ c(z^0) = c(z^1).
$$
By analytic continuation $G^{-1}(u^0)$ and $G^{-1}(u^1)$ are disjoint, hence the above argument shows that $G^{-1}(u^0)$ and $G^{-1}(u^1)$ are disjoint complex varieties contained in the complex variety $g^{-1}( c(z^0) )$. Moreover, because all shape parameters are assumed to have positive imaginary part, it follows from (\ref{G}) that $0 < G(z)(e_1)<5 \pi$ and hence
$$
g^{-1}( c(z^0) ) = G^{-1}(u^0) \bigcup G^{-1}(u^1).
$$
	
	Let $\l = \{\lambda_1, \lambda_2, \lambda_3 \}$ be the set of three simple closed curves shown in Figures \ref{triangulation} and \ref{triangulation_cusp2}. The boundary map with respect to $\l$ and its differential are

{\footnotesize
\begin{equation*}
e^{H_{\l}}(z) = \left(
\begin{matrix}
\frac{z_0'}{z_2''}\\ \\
\frac{z_3}{z_4}\\ \\
\frac{z_0 z_2' z_3' z_4'}{z_1'}
\end{matrix}  \right) \qquad
 H_{\l}(z) = \left(
  \begin{matrix}
  -\log(1-z_0) + \log(z_2) - \log(z_2 -1)\\ \\
  \log(z_3)-\log(z_4)\\ \\
  \log(z_0)+\log(1-z_1)-\log(1-z_2)-\log(1-z_3)-\log(1-z_4)
    \end{matrix} \right),
\end{equation*}
}
and
$$
\diff H_{\l}(z) = \left(
\begin{matrix}
	\frac{1}{1-z_0} & 0 & \frac{-1}{z_2(1-z_2)} & 0 & 0\\
	0 & 0 & 0 & \frac{1}{z_3} & \frac{-1}{z_4}\\
	\frac{1}{z_0} & \frac{-1}{1-z_1} & \frac{1}{1-z_2} & \frac{1}{1-z_3} & \frac{1}{1-z_4}
\end{matrix}
\right).
$$

Together with the differential of the log-curvature map we obtain:
$$
\diff ( G,H)(z) = \left(
	\begin{matrix}
	0 & \frac{-1}{z_1-1} & 0 & 0 & 0\\
	\frac{1}{z_0(z_0-1)} & \frac{-1}{z_1-1} & \frac{1}{z_2} & \frac{1}{z_3(z_3-1)} & \frac{1}{z_4(z_4-1)}\\
	0 & 0 & 0 & 0 & 0\\
	\frac{-1}{z_0(z_0-1)} & \frac{2}{z_1-1} & \frac{-1}{z_2} & \frac{-1}{z_3(z_3-1)} & \frac{-1}{z_4(z_4-1)}\\
	\frac{1}{1-z_0} & 0 & \frac{-1}{z_2(1-z_2)} & 0 & 0\\
	0 & 0 & 0 & \frac{1}{z_3} & \frac{-1}{z_4}\\
	\frac{1}{z_0} & \frac{-1}{1-z_1} & \frac{1}{1-z_2} & \frac{1}{1-z_3} & \frac{1}{1-z_4}
	\end{matrix} \right).
$$

The determinant of the minor obtained by removing the second and third row from $\diff ( G,H)(z)$ is
$$
-\frac{2(-1+z_0+z_3+z_4 + z_0 z_2 -z_0z_2z_3 -z_0z_2z_4 - z_3 z_4 -z_0z_3z_4+z_0z_2z_3z_4)}{z_0 z_1 z_2 z_3 z_4 ( 1- z_0)( 1- z_1)( 1- z_2)( 1- z_3)( 1- z_4)}$$

For all $u \in \ima(G)$, the restriction map
$$
\restr{H_{\l}}{G^{-1}(u)}: G^{-1}(u) \longrightarrow \C^{3}
$$
is a local diffeomorphism onto its image. In particular we can parameterize $G^{-1}(u)$ through $H_{\l}$. For $u^0$ as above, we get the following local parameterization around $z^0$,
$$
\begin{cases}
z_0 = 1 - z_4 k;\\
z_1 = e^{\frac{\pi}{3}}\\
z_2 = \frac{- z_4 k'}{1-z_4 k'}\\
z_3 = z_4 e^{t_2}\\
z_4 \mbox{ is the solution of the equation } \frac{k k'}{1-z_4 k'} = \frac{(1-z_4 k)e^{t_2+\frac{4}{3} \pi i}}{(z_4 e^{t_2}-1)(z_4-1)}
\end{cases}
$$
where $k = e^{\frac{5}{6}\pi i + \frac{t_2+t_3-t_1}{2}}$, $k' = e^{\frac{5}{6}\pi i + \frac{t_2+t_3+t_1}{2}}$ and for all $(t_1,t_2,t_3) \in \C^3$ close to $$H_{\l}(e^{\frac{\pi}{3}i},e^{\frac{\pi}{3}i},e^{\frac{\pi}{3}i},e^{\frac{\pi}{3}i},e^{\frac{\pi}{3}i})=(0,0,\pi i).$$
	

\textbf{Acknowledgements.}
The first author is supported by a Commonwealth of Australia International Postgraduate Research Scholarship.
The research of the second author is partially supported by the United States National Science Foundation grants NSF DMS 1222663, 1207832 and 1405106.
Research of the third author is partially supported by Australian Research Council grant DP140100158.
The authors are grateful to the referees for comments that improved this paper.


\bibliographystyle{amsplain}


\address{Alex Casella,\\ School of Mathematics and Statistics F07,\\ The University of Sydney,\\ NSW 2006 Australia\\
(casella@maths.usyd.edu.au)\\--}

\address{Feng Luo,\\ Department of Mathematics,\\ Rutgers University,\\ New Brunswick, NJ 08854, USA\\
(fluo@math.rutgers.edu)\\--}

\address{Stephan Tillmann,\\ School of Mathematics and Statistics F07,\\ The University of Sydney,\\ NSW 2006 Australia\\
(tillmann@maths.usyd.edu.au)}

\Addresses

\end{document}